\newcommand{\bfD}{{\bf D}}
\newcommand{\bfH}{{\bf H}}
\newcommand{\bfI}{{\bf I}}
\newcommand{\bfJ}{{\bf J}}
\newcommand{\bfM}{{\bf M}}
\newcommand{\bfT}{{\bf T}}
\newcommand{\bfU}{{\bf U}}
\newcommand{\bfV}{{\bf V}}
\newcommand{\bfX}{{\bf X}}
\newcommand{\bfZ}{{\bf Z}}
\newcommand{\bfa}{{\bf a}}
\newcommand{\bfb}{{\bf b}}
\newcommand{\bfc}{{\bf c}}
\newcommand{\bfx}{{\bf x}}
\newcommand{\bfy}{ {\bf y}}
\newcommand{\bfp}{{\bf p}}
\newcommand{\bfv}{{\bf v}}
\newcommand{\R}{\ensuremath{\mathds{R}}}
\DeclareMathOperator*{\argmin}{arg\,min}
\newdimen\iwidth
\newdimen\iheight
\title{LSEMINK: A Modified Newton-Krylov Method for Log-Sum-Exp Minimization\thanks{%
Received... Accepted... Published online on... Recommended by....
}}
\author{Kelvin Kan\footnotemark[2] 
        , James G. Nagy\footnotemark[3]
        ,\and Lars Ruthotto\footnotemark[3]}
\shorttitle{LSEMINK: A Modified Newton-Krylov Method} 
\begin{document}

\maketitle

\renewcommand{\thefootnote}{\fnsymbol{footnote}}

\footnotetext[2]{Department of Mathematics, Emory University, USA (kelvin.kan@emory.edu)}
\footnotetext[3]{Departments of Mathematics and Computer Science, Emory University, USA (jnagy@emory.edu, lruthotto@emory.edu)}

\begin{abstract}
This paper introduces LSEMINK, an effective modified Newton-Krylov algorithm geared toward minimizing the log-sum-exp function for a linear model. 
Problems of this kind arise commonly, for example, in geometric programming and multinomial logistic regression. 
Although the log-sum-exp function is smooth and convex, standard line search Newton-type methods can become inefficient because the quadratic approximation of the objective function can be unbounded from below. 
To circumvent this, LSEMINK modifies the Hessian by adding a shift in the row space of the linear model. We show that the shift renders the quadratic approximation to be bounded from below and that the overall scheme converges to a global minimizer under mild assumptions. 
Our convergence proof also shows that all iterates are in the row space of the linear model, which can be attractive when the model parameters do not have an intuitive meaning, as is common in machine learning.
Since LSEMINK uses a Krylov subspace method to compute the search direction, it only requires matrix-vector products with the linear model, which is critical for large-scale problems.
Our numerical experiments on image classification and geometric programming illustrate that LSEMINK considerably reduces the time-to-solution and increases the scalability compared to geometric programming and natural gradient descent approaches. It has significantly faster initial convergence than standard Newton-Krylov methods, which is particularly attractive in applications like machine learning. In addition, LSEMINK is more robust to ill-conditioning arising from the nonsmoothness of the problem. We share our MATLAB implementation at~\url{https://github.com/KelvinKan/LSEMINK}.
\end{abstract}

\begin{keywords}
log-sum-exp minimization, Newton-Krylov method, modified Newton method, machine learning, geometric programming
\end{keywords}

\begin{AMS}
65K10
\end{AMS}

\section{Introduction}\label{sec:intro}
We consider minimization problems of the form
\begin{equation}\label{eq:general_f}
    \min_{\bfx \in \R^n} f(\bfx) = \sum_{k=1}^N w^{(k)} \left[ g^{(k)}(\bfx) - {\bfc^{(k)}}^\top \bfJ^{(k)} \bfx \right],
\end{equation}
where
\begin{equation*}
    g^{(k)}(\bfx):=\log \left({\bf 1}_m^\top \exp({\bfJ^{(k)} \bfx + \bfb^{(k)}}) \right)
\end{equation*}
is the log-sum-exp function for a linear model defined by $\bfJ^{(k)} \in \mathbb{R}^{m \times n}$ and $\bfb^{(k)} \in \mathbb{R}^m$, $\bfc^{(k)} \in \mathbb{R}^m$,  ${\bf 1}_m \in \mathbb{R}^{m}$ is a vector of all ones, $w^{(k)}$'s are weights, and $N$ is the number of linear models.
Problem~\eqref{eq:general_f} arises commonly in machine learning and optimization. For example, multinomial logistic regression (MLR) in classification problems~\cite{fung2019admm,newman2021train,kan2021pnkh} is formulated as~\eqref{eq:general_f}. In geometric programming~\cite{tseng2015milp,xi2020log,zhan2018accelerated}, a non-convex problem can be convexified through a reformulation to the form~\eqref{eq:general_f}. The log-sum-exp function itself also has extensive applications in machine learning. For instance, it can serve as a smooth approximation to the element-wise maximum function~\cite{ghodousian2022log,nielsen2016guaranteed}, where smoothness is desirable in model design since gradient-based optimizers are commonly used. Moreover, the log-sum-exp function is closely related to widely used softmax and entropy functions. For instance, the dual to an entropy maximization problem is a log-sum-exp minimization problem~\cite[Example~5.5]{boyd2004convex}, and the gradient of the log-sum-exp function is the softmax function~\cite{gao2017properties}.

Despite the smoothness and convexity of the log-sum-exp function, a standard implementation of line search Newton-type methods can be problematic. To realize this, note that the gradient and Hessian of the log-sum-exp function are given by
\begin{align*}
\begin{split}
    \nabla f(\bfx) = \sum_{k=1}^N w^{(k)} {\bfJ^{(k)}}^\top (\bfp^{(k)} - \bfc^{(k)}), \quad &\text{and} \quad \nabla^2 f(\bfx) = \sum_{k=1}^N w^{(k)} {\bfJ^{(k)}}^\top \bfH^{(k)} \bfJ^{(k)}, \\
    \text{with} \quad \bfp^{(k)} = \frac{\exp({\bfJ^{(k)} \bfx + \bfb^{(k)}})}{{\bf 1}_m^\top \exp({\bfJ^{(k)} \bfx + \bfb^{(k)}})}, \quad &\text{and} \quad \bfH^{(k)} = {\rm diag}(\bfp^{(k)}) - \bfp^{(k)} {\bfp^{(k)}}^\top.
\end{split}
\end{align*}
The Hessian is positive semi-definite and rank-deficient because the null space of the $\bfH^{(k)}$'s contains ${\bf 1}_m$. Even more problematic is that when $\bfp^{(k)}$'s are close to a standard basis vector (which, for example, commonly occurs in MLR), the Hessian is close to the zero matrix even when the gradient is non-zero. In Newton's method, this means that the local quadratic approximation can be unbounded from below. To be precise, it is unbounded from below if and only if the gradient is not in the column space of the Hessian~\cite[Exercise~2.19]{beck2014introduction}. 

Disciplined convex programming (DCP) packages (e.g., CVX~\cite{grant2008cvx}) can reliably solve the log-sum-exp minimization problem through a reformulation. For instance, CVX first formulates the problem using exponential cones~\cite[Section~5.2.6]{aps2020mosek} and applies backend solvers to solve the resulting problem directly (e.g., MOSEK~\cite{aps2019mosek}) or through successive polynomial approximation (e.g., SPDT3~\cite{tutuncu2003solving} and SeDuMi~\cite{sturm1999using}). However, this approach can be computationally demanding as the number of conic constraints scales with the product of the number of rows in the linear models and the number of linear models. For instance, CVX did not complete the image classification experiments for the whole dataset in~\Cref{subsec:classification} on a standard laptop in thirty minutes, while LSEMINK finishes on the same hardware in thirty seconds. Furthermore, the formulation relies on access to the elements of the $\bfJ^{(k)}$'s; i.e., this approach is not applicable in a matrix-free setting where $\bfJ^{(k)}$'s are not built explicitly, and only routines for performing matrix-vector products are provided.

Tikhonov regularization~\cite{engl1996regularization,goodfellow2016deep,hansen1998rank}, which adds $\frac{\alpha}{2} \| \bfx \|_2^2$ with $\alpha>0$ to the objective function,  avoids the cost of reformulation and alleviates the convergence issues with Newton-type methods. 
The regularization shifts the Hessian by $\alpha \bfI$ and renders it positive definite, where $\bfI$ is the identity matrix. Nonetheless, Tikhonov regularization introduces a bias and consequently changes the optimal solution. The regularization parameter $\alpha$ has to be chosen judiciously -- a large $\alpha$ renders the problem easier to solve and produces a more regular solution but introduces more bias. In addition, one cannot use effective parameter selection algorithms~\cite{golub1979,chung2008,calvetti1999,vogel2002,kan2020avoiding} for linear problems due to the nonlinearity of the log-sum-exp function. On the other hand, first-order methods like gradient descent~\cite{boyd2004convex,nocedal2006}, or AdaGrad~\cite{duchi2011adaptive}, which do not use the Hessian matrix, can avoid the problem. However, their convergence is inferior to methods that utilize curvature information~\cite{dunn1980}.

Modified Newton-type methods effectively tackle problems with rank-deficient or indefinite Hessians and do not introduce bias. The idea is to add a shift to the Hessian so that at the $i$th iteration, the scheme solves
\begin{equation}\label{eq:logsumexp_shift_general}
    \min_{\bfx} \frac{1}{2} (\bfx - \bfx_i)^\top (\nabla^2 f(\bfx_i) + \beta_i \bfM_i) (\bfx - \bfx_i) + \nabla f (\bfx_i)^\top (\bfx - \bfx_i),
\end{equation}
where $\beta_i$ is a parameter, and the shift $\bfM_i$ renders the Hessian to be sufficiently positive definite. The quadratic approximation is bounded from below since the modified Hessian is positive definite. Hence the convergence issues are avoided.  The effect of the Hessian shift is reminiscent of the Tikhonov regularization approach. Indeed, the scheme is sometimes called a Tikhonov-regularized Newton update~\cite[Chapter~3.3]{parikh2014proximal}. However, the key conceptual difference between~\eqref{eq:logsumexp_shift_general} and Tikhonov regularization is that the former does not introduce any bias to the problem~\cite{vidal2022taming}, i.e., the optimal solution to the problem is independent of $\beta_i$'s. There are different ways of defining $\bfM_i$. For instance, $\bfM_i$ is spanned by some of the eigenvectors of the Hessian~\cite{greenstadt1967relative,nocedal2006}, or is a modification to the factorization of the Hessian~\cite{gill1974newton,more1979use,nash1984newton}. However, the computations needed for these approaches are intractable for large-scale problems commonly arising in machine learning. A simple and computationally feasible approach is to set $\bfM_i$ as the identity matrix~\cite{levenberg1944method,marquardt1963algorithm,parikh2014proximal}, which will be used as a comparing method in our numerical experiments.

In this paper, we propose LSEMINK, a novel modified Newton-Krylov method that circumvents the drawbacks outlined above. The main novelty in our method is the Hessian shift $\bfM_i=\sum_{k=1}^{N} w^{(k)} {\bfJ^{(k)}}^\top \bfJ^{(k)}$. This generates an update in the row space of the linear model, as compared to the aforementioned modified Newton-type methods, which returns an update in the parameter space of the linear model (i.e., the $\bfx$ space). This property is preferable in machine learning applications since model parameters often do not have an intuitive meaning, while the row space of the linear model contains  interpretable data features. Note that standard convergence guarantees (e.g., ~\cite[Chapter~6.2]{nocedal2006}), which often require positive definiteness of the modified Hessian, do not apply to our method since our modified Hessian can be rank-deficient. We show that the quadratic approximation is bounded from below, and the overall scheme provably converges to a global minimum. Since a Krylov subspace method is applied to approximately solve~\eqref{eq:logsumexp_shift_general} to obtain the next iterate, LSEMINK is suitable for large-scale problems where the linear models are expensive to build and are only available through matrix-vector multiplications. Our numerical experiments on image classification and geometric programming illustrate that LSEMINK considerably reduces the time-to-solution and increases the scalability compared to DCP and natural gradient descent and has significantly faster initial convergence than standard Newton-Krylov methods.

This paper is organized as follows. In~\Cref{sec:proposed_method}, we describe the proposed LSEMINK. In~\Cref{sec:proof}, we provide a global convergence guarantee. In~\Cref{sec:experiments}, we demonstrate the effectiveness of LSEMINK using two numerical experiments motivated by geometric programming and image classification, respectively. We finally conclude the paper in~\Cref{sec:conclusion}.

\section{LSEMINK}\label{sec:proposed_method}
We propose LSEMINK, a modified Newton-Krylov method geared toward log-sum-exp minimization problems of the form~\eqref{eq:general_f}. 
At the $i$th iteration, we first consider the quadratic approximation~\eqref{eq:logsumexp_shift_general} with $ \bfM_i=\sum_{k=1}^N w^{(k)} {\bfJ^{(k)}}^\top \bfJ^{(k)}$. That is,
\begin{align}
\hspace{-0.7em}
\begin{split}\label{eq:logsumexp_shift}
    \min_{\bfx} q_i(\bfx) &= \frac{1}{2} (\bfx - \bfx_i)^\top \left(\nabla^2 f(\bfx_i) + \beta_i \sum_{k=1}^N w^{(k)} {\bfJ^{(k)}}^\top \bfJ^{(k)} \right) (\bfx - \bfx_i) + \nabla f (\bfx_i)^\top (\bfx - \bfx_i) \\
    &=  \frac{1}{2} 
     (\bfx - \bfx_i)^\top \left[ \sum_{k=1}^N w^{(k)} \left( {\bfJ^{(k)}}^\top (\bfH^{(k)}_i + \beta_i \bfI) \bfJ^{(k)} \right) \right] (\bfx - \bfx_i) + \nabla f (\bfx_i)^\top (\bfx - \bfx_i),
\end{split}
\end{align}
whose minimizer is given by $\bfx_i + \Delta \bfx_i$, where $\Delta \bfx_i$ solves the Newton equation
\begin{equation}\label{eq:Newton_eq}
    \nabla^2 q_i(\bfx_i) \Delta \bfx_i = - \nabla q_i (\bfx_i),
\end{equation}
and $\bfH^{(k)}_i$ is $\bfH^{(k)}$ evaluated at $\bfx_i$. It is important to note that the Hessian shift in~\eqref{eq:logsumexp_shift} is different from the typical modified Newton approaches (e.g. eigenvalue modification~\cite{greenstadt1967relative,nocedal2006}, identity matrix~\cite{levenberg1944method,marquardt1963algorithm,parikh2014proximal}, or modification to the factorization of the Hessian~\cite{gill1974newton,more1979use,nash1984newton}) which seek to obtain a positive definite Hessian and lead to an update in the parameter space of the linear model (i.e. the $\bfx$ space).
Instead, it generates an update direction in the row space of the linear models. This is preferable especially in machine learning applications because model parameters often do not have intuitive meaning while the row space of the linear models contains data features and is explicable. Although the Hessian of~\eqref{eq:logsumexp_shift} is rank-deficient especially when the linear models are over-parametrized (i.e. $\bfJ^{(k)}$'s have more columns than rows), it is positive definite in the row space of the linear model. Consequently, the quadratic approximation is bounded from below, and the overall scheme provably converges to a global minimum; see~\Cref{sec:proof} for a detailed derivation. 

An alternative formulation for~\eqref{eq:logsumexp_shift} is
\begin{equation*}
    \min_{\bfx}  \frac{1}{2} (\bfx - \bfx_i)^\top \nabla^2 f(\bfx_i) (\bfx - \bfx_i) + \nabla f (\bfx_i)^\top (\bfx - \bfx_i) + \frac{\beta_i}{2} \sum_{k=1}^N w^{(k)} \| \bfJ^{(k)} (\bfx - \bfx_i)\|_2^2,
\end{equation*}
which can be interpreted as a Newton scheme with a proximal term acting on the row space of $\bfJ^{(k)}$'s. This formulation shows that $\beta_i$ controls the step size in a nonlinear line search arc. To be precise, $\beta_i=0$ and $\infty$ correspond to a Newton update with step size $1$ and $0$, respectively, and the update is given nonlinearly for $0<\beta_i<\infty$. The formulation also shows that our proposed scheme bears similarity to $L^2$ natural gradient descent (NGD) methods~\cite{pascanu2013revisiting,NurbekyanEtAl2022} which use the same proximal term. Nonetheless, unlike our approach, $L^2$ NGD methods generally do not directly incorporate Hessian information into its search direction and approximate curvature information using only the linear model. 

The crucial difference between the proximal term and Tikhonov regularization is that the former does not introduce any bias~\cite{parikh2014proximal,vidal2022taming}; i.e., the optimal solution is independent of $\beta_i$. Another advantage is that Tikhonov regularization requires parameter tuning, which is commonly done using a grid search for nonlinear problems like~\eqref{eq:general_f}, while in our proposed method $\beta_i$'s are automatically selected by a backtracking Armijo line search scheme. The proposed scheme can also be perceived as a proximal point algorithm acting on the second-order approximation~\cite{parikh2014proximal}.

We compute the update direction $\Delta \bfx_i$ by approximately solving the Newton equation~\eqref{eq:Newton_eq} using a Krylov subspace method (e.g., conjugate gradient method~\cite{nocedal2006,boyd2004convex}) and obtain the next iterate $\bfx_{i+1} = \bfx_{i} + \Delta \bfx_{i}$. In particular, the Krylov subspace is given by
\begin{align}\label{eq:Krylov}
\begin{split}
    & \quad \; \mathcal{K}_r(\nabla^2 q_i(\bfx_i), \nabla q_i (\bfx_i)) \\
    &= \mathcal{K}_r \left(\sum_{k=1}^N w^{(k)} \left( {\bfJ^{(k)}}^\top (\bfH^{(k)}_i + \beta_i \bfI) \bfJ^{(k)} \right), \sum_{k=1}^N w^{(k)} {\bfJ^{(k)}}^\top (\bfp_i^{(k)} - \bfc^{(k)}) \right),
\end{split}
\end{align}
where $r$ is the dimension of the Krylov subspace and $\bfp_i^{(k)}$ is $\bfp^{(k)}$ evaluated at $\bfx_i$.
Since the Krylov subspace method only requires routines to perform Hessian-vector multiplications, LSEMINK is applicable to large-scale problems commonly arising in machine learning applications where the linear models are only available through matrix-vector products. 
An outline of the implementation of LSEMINK is presented in~\Cref{alg:outline}.


LSEMINK has significantly faster initial convergence compared with standard Newton-Krylov solvers. This is particularly attractive in applications that do not require high accuracy, e.g., image classification. LSEMINK also considerably reduces the time-to-solution and has better scalability compared to geometric programming and natural gradient descent approaches. It avoids the respective drawbacks of the solvers outlined in~\Cref{sec:intro}. Moreover, it is more robust to ill-conditioning arising from the nonsmoothness of the problem; see~\Cref{sec:experiments} for numerical experiments. We provide a MATLAB implementation at~\url{https://github.com/KelvinKan/LSEMINK}. The implementation is easy to experiment with, as it only requires minimal knowledge and input from the user.

\begin{algorithm}[t]
\begin{algorithmic}[1]
 \STATE Inputs: Linear models $\bfx \mapsto \bfJ^{(k)}\bfx$, $\bfx \mapsto {\bfJ^{(k)}}^\top \bfx$, $\bfb^{(k)}$, $\bfc^{(k)}$, and weights $w^{(k)}$ for $k=1,2,...,N$. Initial guess $\bfx_0$, initial $\beta_0$.
 \STATE Inputs: Tolerances xtol, gtol for Newton iterations. Tolerances ktol and kmaxiter for the Krylov subspace method. Line search parameter $\gamma \in (0,1)$.
  \FOR{$i=0, 1,2,\ldots$}
    \STATE compute $f(\bfx_i)$, $\nabla f(\bfx_i)$ and build routines for performing $ \bfv \mapsto \nabla^2 f(\bfx_i) \bfv $
    \FOR{$j=0,1,2,\ldots$}
    \STATE compute $\Delta \bfx_i$ by applying Krylov-subspace methods to 
    approximately solve $\nabla^2 q_i(\bfx_i)  \Delta \bfx_i = - \nabla q_i(\bfx_i)$ with the current $\beta_i$ and Krylov subspace $\mathcal{K}_r(\nabla^2 q_i (\bfx_i), \nabla q_i(\bfx_i))$ until the relative residue drops below ktol or number of iterations exceeds kmaxiter    
    \IF{$f(\bfx_i + \Delta \bfx_i) < f(\bfx_i) + \gamma \nabla f(\bfx_i)^\top \Delta \bfx_i$}
    \STATE set $\bfx_{i+1}=\bfx_{i} + \Delta \bfx_{i}$ and break
    \ELSE
    \STATE set $\beta_{i} = 2\beta_{i}$
    \ENDIF
    \ENDFOR
    \IF{ $\| \bfx_{i+1} - \bfx_{i} \|_2 / \| \bfx_{i} \|_2 <$ xtol or $\| \nabla f(\bfx_{i+1})\|_2 <$ gtol}
    \STATE break
    \ENDIF
    \IF{$j=0$}
    \STATE set $\beta_{i+1} = 0.5*\beta_{i}$
    \ELSE
    \STATE set $\beta_{i+1} = \beta_{i}$
    \ENDIF
 \ENDFOR
 \STATE Output: approximate solution $\bfx_{i+1}$. 
\end{algorithmic}
\caption{Outline of LSEMINK for solving~\eqref{eq:general_f}}
\label{alg:outline}
\end{algorithm}

\section{Proof of Global Convergence}\label{sec:proof}
In this section, we prove the global convergence of the proposed LSEMINK. It is noteworthy that existing convergence results cannot be directly applied due to the rank-deficiency of our modified Hessian. For instance, it is assumed in~\cite[Chapter~6.2]{nocedal2006} that the modified Hessian is positive definite and has a bounded condition number. Our proof is modified from the approach in~\cite{lee2014proximal}, which studies proximal Newton-type methods for composite functions. 

We first state the main theorem.
\begin{theorem}\label{thm:main}
Assume that $f$ is defined in~\eqref{eq:general_f},
and $\inf\limits_{\bfx} f(\bfx)$ is attained in $\mathbb{R}$, then the sequence $\{ \bfx_i \}_i$ generated by LSEMINK converges to a global minimum regardless of the choice of initial guess $\bfx_0$.
\end{theorem}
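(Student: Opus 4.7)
The plan is to adapt the standard descent-method convergence analysis to the rank-deficient Hessian setting by restricting everything to the affine subspace in which every iterate lives. Let $V$ denote the row space of the stacked linear model (the span of the rows of all $\bfJ^{(k)}$'s) and set $\mathcal{V} := \bfx_0 + V$. The first key observation is that every search direction $\Delta \bfx_i$ lies in $V$: the seed vector of the Krylov subspace~\eqref{eq:Krylov} is $\nabla f(\bfx_i) = \sum_k w^{(k)} {\bfJ^{(k)}}^\top (\bfp_i^{(k)} - \bfc^{(k)}) \in V$, and the modified Hessian $\nabla^2 q_i(\bfx_i) = \sum_k w^{(k)} {\bfJ^{(k)}}^\top (\bfH_i^{(k)} + \beta_i \bfI) \bfJ^{(k)}$ maps $V$ into itself, so inductively $\bfx_i \in \mathcal{V}$. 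The second key observation is that the shift $\sum_k w^{(k)} {\bfJ^{(k)}}^\top \bfJ^{(k)}$ is positive definite on $V$, so $\nabla^2 q_i(\bfx_i)|_V \succ 0$ whenever $\beta_i > 0$, even though this matrix is only positive semidefinite on $\R^n$. Finally, because $f$ depends on $\bfx$ only through the products $\bfJ^{(k)} \bfx$, any global minimizer may be translated by an element of $V^\perp = \bigcap_k \ker \bfJ^{(k)}$ without changing $f$, so the assumed existence of a minimizer in $\R^n$ yields a minimizer in $\mathcal{V}$.

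Second, I would show that the backtracking inner loop on $\beta_i$ always terminates in finitely many steps. Because the modified Hessian is positive definite on $V$ and $\nabla f(\bfx_i), \Delta \bfx_i \in V$, the Krylov solution of the Newton equation satisfies $\nabla f(\bfx_i)^\top \Delta \bfx_i = -\Delta \bfx_i^\top \nabla^2 q_i(\bfx_i) \Delta \bfx_i < 0$ whenever $\nabla f(\bfx_i) \neq \mathbf{0}$, so $\Delta \bfx_i$ is a strict descent direction. As $\beta_i \to \infty$ the Newton equation is dominated by the shift and $\|\Delta \bfx_i\|_2 = O(1/\beta_i)$; a Taylor expansion of $f$ around $\bfx_i$ combined with the Lipschitz continuity of $\nabla f$ on the monotone sublevel set $\{\bfx \in \mathcal{V} : f(\bfx) \leq f(\bfx_0)\}$ then shows that the Armijo condition must eventually hold, and a sharper quantitative version of the same argument produces a uniform upper bound $\beta_i \leq \beta_{\max}$ independent of $i$.

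With this uniform bound, together with uniform positive definiteness of $\nabla^2 q_i(\bfx_i)|_V$, the standard Armijo analysis carried out inside $V$ yields a sufficient-decrease inequality of the form $f(\bfx_i) - f(\bfx_{i+1}) \geq c \, \|\nabla f(\bfx_i)\|_2^2$ with $c > 0$ independent of $i$. Telescoping and using $\inf f > -\infty$ gives $\nabla f(\bfx_i) \to \mathbf{0}$ and $f(\bfx_i) \to \inf f$. Every subsequential limit of $\{\bfx_i\}$ is therefore a global minimizer in $\mathcal{V}$; a Fejer-monotonicity argument in the spirit of~\cite{lee2014proximal}, using that each step is a contraction toward the minimizer set in the local metric defined by $\nabla^2 q_i(\bfx_i)|_V$, then promotes subsequential convergence to convergence of the entire sequence, yielding the claim.

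The step I expect to be the main obstacle is the pair of uniformity statements in the middle paragraph: classical modified-Newton theorems, e.g.,~\cite[Chapter~6.2]{nocedal2006}, assume positive definiteness of the modified Hessian on all of $\R^n$ with a bounded condition number, and neither assumption is available here. Restricting the analysis to $\mathcal{V}$ is essential, but one still has to show that the inexact Krylov step retains enough Newton-step character to deliver quadratic-type sufficient decrease, and that the relevant Lipschitz and curvature constants can be controlled on a sublevel set whose boundedness must itself be extracted from the problem's structure rather than assumed outright.
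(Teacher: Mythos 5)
Your proposal follows the same skeleton as the paper's proof for its core: restrict the analysis to the row space $V$ of the stacked linear model, where the shift $\sum_k w^{(k)}{\bfJ^{(k)}}^\top\bfJ^{(k)}$ makes the modified Hessian positive definite; observe that the Krylov seed and hence every update lies in $V$ (the paper's \Cref{lemma:update_dir}); show the update is a descent direction; show the backtracking on $\beta_i$ terminates with a uniform bound via Lipschitz continuity of $\nabla f$ and the smallest nonzero eigenvalue of $\bfJ^\top\bfJ$ (the paper's \Cref{lemma:armijo_CG}); and telescope the Armijo decrease. One simplification you missed: your worry about controlling the Lipschitz constant on a sublevel set whose boundedness "must be extracted from the problem's structure" is unnecessary, because the gradient of the log-sum-exp of a linear model is the softmax composed with that model, which is \emph{globally} Lipschitz on $\R^n$; the paper uses this directly to get the explicit threshold $\beta_i \geq L/(2\lambda_{\rm min}(1-\gamma))$ with no sublevel-set argument. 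Your endgame also differs: you drive $\|\nabla f(\bfx_i)\|_2\to 0$ via a decrease bound of the form $c\|\nabla f(\bfx_i)\|_2^2$ and then appeal to Fej\'er monotonicity, whereas the paper drives $\Delta\bfx_i^\top\bfJ^\top(\bfH_i+\beta_i\bfI)\bfJ\,\Delta\bfx_i\to 0$, concludes $\Delta\bfx_i\to 0$, and finishes with a fixed-point characterization of stationarity (\Cref{lemma:fixed_point}) plus convexity.

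The one step I would flag as a genuine gap is the Fej\'er-monotonicity claim. Fej\'er monotonicity of an Armijo-damped, inexact Krylov--Newton step with respect to a metric $\nabla^2 q_i(\bfx_i)|_V$ that changes at every iteration is not a standard fact: you would need the metrics to be uniformly equivalent and an actual nonexpansiveness estimate toward the minimizer set, neither of which you derive, and you also need boundedness of $\{\bfx_i\}$ before subsequential limits exist at all --- attainment of $\inf f$ does not make the sublevel sets of $f$ restricted to $\bfx_0+V$ bounded. (The paper's own final step, inferring from $\Delta\bfx_i\to 0$ that $\bfx_i$ converges to a fixed point, elides a similar difficulty, but since you are supplying the proof you should either prove the Fej\'er estimate or switch to the fixed-point route.) The rest of your argument --- in particular the $O(1/\beta_i)$ bound on the step, the strict descent property, and the sufficient-decrease inequality in terms of $\|\nabla f(\bfx_i)\|_2^2$ (which needs, and gets, a uniform upper bound on the modified Hessian from $\beta_i\le\beta_{\max}$ and the boundedness of $\bfH_i$) --- is sound and consistent with the paper.
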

We note that \Cref{thm:main} also applies to the case where the Newton equation~\eqref{eq:Newton_eq} is solved exactly. In the following, we will first discuss some properties of LSEMINK. We will then state and prove four lemmas which will aid the proof of~\Cref{thm:main}.

For simplicity of exposition and without loss of generality, in this section, we drop the superscript and focus on the case with only one linear model defined by $\bfJ$, $\bfb$, and $\bfc$, and the weight $w=1$. In this case, the Krylov subspace in~\eqref{eq:Krylov} becomes
\begin{equation}\label{eq:sim_Krylov}
    \mathcal{K}_r(\nabla^2 q_i(\bfx_i), \nabla q_i (\bfx_i)) = \mathcal{K}_r(\bfJ^\top (\bfH_i + \beta_i \bfI) \bfJ, \bfJ^\top (\bfp_i - \bfc)).
\end{equation}
We note that our proof can be straightforwardly extended to the general case by setting
\begin{align*}
    \bfJ = [\bfJ^{(1)}; ...; \bfJ^{(N)}], \quad & \bfc = [w^{(1)}\bfc^{(1)};  ...; w^{(N)}\bfc^{(N)}], \\
    \bfp_i = [w^{(1)}\bfp_i^{(1)}; ...; w^{(N)}\bfp_i^{(N)}], \quad \text{and} \quad & \bfH_i = {\rm blkdiag}(w^{(1)}\bfH_i^{(1)}, ..., w^{(N)}\bfH_i^{(N)}),
\end{align*}
where $\rm blkdiag$ denotes a block diagonal matrix.

Recall that the Krylov subspace in~\eqref{eq:sim_Krylov} is constructed to approximately solve the Newton equation and obtain the update direction $\Delta \bfx_i$. This is equivalent to building a rank-$r$ approximation $\nabla^2 q_i(\bfx_i) \approx \bfV_i \bfT_i \bfV_i^\top$ and computing the next iterate by
\begin{equation}\label{eq:min_w_update}
    \bfx_{i+1} = \argmin_{\bfx} \frac{1}{2} (\bfx - \bfx_i)^\top \bfV_i \bfT_i \bfV_i^\top (\bfx - \bfx_i) + \nabla f (\bfx_i)^\top (\bfx - \bfx_i).
\end{equation}
Here, the columns of $\bfV_i \in \mathbb{R}^{n \times r}$ form an orthonormal basis for the Krylov subspace and $\bfT_i \in \mathbb{R}^{r \times r}$. Since $\nabla f(\bfx_i) \in {\rm row}(\bfJ) = {\rm col}(\bfJ^\top (\bfH_i + \beta_i \bfI) \bfJ)$ for $\beta_i>0$ and the Krylov subspace always contains $\nabla f(\bfx_i)$, the column space of $\bfV_i \bfT_i \bfV_i^\top$ always contains $\nabla f(\bfx_i)$. This means that the quadratic function~\eqref{eq:min_w_update} is bounded from below~\cite[Exercise~2.19]{beck2014introduction} and admits a minimum. The iterate $\bfx_{i+1}$ is the minimum norm solution to~\eqref{eq:min_w_update} given by
\begin{equation}\label{eq:update_dir}
    \bfx_{i+1} = \bfx_{i} + \Delta \bfx_{i}, \quad \text{where} \quad \Delta \bfx_{i} = - \bfV_i \bfT_i^{-1} \bfV_i^\top \nabla f(\bfx_i).
\end{equation}

Next, we state and prove some lemmas which will be used to prove the main theorem.
\begin{lemma}[Update Direction]\label{lemma:update_dir}
    The update $\Delta \bfx_i$ generated by the iterative scheme~\eqref{eq:update_dir} satisfies
    \begin{align}
        \Delta \bfx_i &\in {\rm row} (\bfJ) \label{eq:feature_space}, \\
        \Delta \bfx_i^\top \nabla^2 q_i(\bfx_i) \Delta \bfx_i &= \Delta \bfx_{i}^\top \bfV_{i} \bfT_{i} \bfV_{i}^\top \Delta \bfx_{i} \label{eq:full_rank_equi}.
    \end{align}
    Here,~\eqref{eq:feature_space} means that the update direction is in the row space of the linear model.
\end{lemma}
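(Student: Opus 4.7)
The plan is to treat the two claims separately, since they follow from different structural facts about the Krylov subspace construction.

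For \eqref{eq:feature_space}, I would proceed by an induction on the degree of the Krylov polynomials. The starting vector $\nabla q_i(\bfx_i) = \bfJ^\top(\bfp_i - \bfc)$ lies in $\mathrm{row}(\bfJ) = \mathrm{col}(\bfJ^\top)$ by inspection. Moreover, since $\nabla^2 q_i(\bfx_i) = \bfJ^\top(\bfH_i + \beta_i \bfI)\bfJ$ factors through $\bfJ^\top$, multiplication by the Hessian sends every vector in $\R^n$ into $\mathrm{row}(\bfJ)$. Consequently each basis vector $(\nabla^2 q_i(\bfx_i))^j \nabla q_i(\bfx_i)$ generating $\mathcal{K}_r$ lies in $\mathrm{row}(\bfJ)$, so the whole Krylov subspace, and in particular $\mathrm{range}(\bfV_i)$, sits inside $\mathrm{row}(\bfJ)$. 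Since $\Delta \bfx_i = -\bfV_i \bfT_i^{-1} \bfV_i^\top \nabla f(\bfx_i)$ is a linear combination of the columns of $\bfV_i$, it inherits membership in $\mathrm{row}(\bfJ)$.

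For \eqref{eq:full_rank_equi}, the key observation is that $\Delta \bfx_i \in \mathrm{range}(\bfV_i)$, so the orthogonal projector $\bfV_i \bfV_i^\top$ onto the Krylov subspace satisfies $\bfV_i \bfV_i^\top \Delta \bfx_i = \Delta \bfx_i$. Using this identity twice yields
\begin{equation*}
\Delta \bfx_i^\top \nabla^2 q_i(\bfx_i) \Delta \bfx_i = \Delta \bfx_i^\top \bfV_i \bigl( \bfV_i^\top \nabla^2 q_i(\bfx_i) \bfV_i \bigr) \bfV_i^\top \Delta \bfx_i.
\end{equation*}
The matrix $\bfT_i$ arising in the Krylov/Lanczos-type rank-$r$ approximation to the symmetric positive semi-definite operator $\nabla^2 q_i(\bfx_i)$ is exactly the Galerkin projection $\bfV_i^\top \nabla^2 q_i(\bfx_i) \bfV_i$; this is the defining property of the Lanczos tridiagonalization underlying the Krylov solver used to compute $\Delta \bfx_i$ in \eqref{eq:update_dir}. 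Substituting gives the claimed equality.

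The only potential subtlety is the identification $\bfT_i = \bfV_i^\top \nabla^2 q_i(\bfx_i) \bfV_i$, which is really a statement about the Krylov method chosen to approximately solve the Newton equation. I would make this explicit by recalling that, for the symmetric matrix $\nabla^2 q_i(\bfx_i)$, the orthonormal basis $\bfV_i$ produced by the Lanczos/CG process satisfies precisely this Galerkin relation, so the rank-$r$ approximation in \eqref{eq:min_w_update} is well defined and the minimum-norm minimizer in \eqref{eq:update_dir} inherits the structure used above. Otherwise the arguments are purely linear-algebraic and short.
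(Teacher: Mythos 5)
Your proposal is correct and follows essentially the same route as the paper: part one is the paper's ``by construction the Krylov subspace lies in ${\rm row}(\bfJ)$'' argument spelled out as an induction, and part two rests on the same two facts the paper uses, namely $\Delta\bfx_i\in{\rm col}(\bfV_i)$ and the Galerkin identity $\bfT_i=\bfV_i^\top\nabla^2 q_i(\bfx_i)\bfV_i$ (which the paper encodes implicitly as the $(1,1)$ block of its full orthogonal representation). Your explicit acknowledgement of the Galerkin relation is a minor presentational improvement, not a different argument.
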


\begin{proof}[Proof of \Cref{lemma:update_dir}]
By construction, the Krylov subspace~\eqref{eq:Krylov} is a subspace of ${\rm row} (\bfJ)$, and by~\eqref{eq:update_dir} we have $\Delta \bfx_{i} \in {\rm col} (\bfV_{i})$. Thus we have $\Delta \bfx_{i} \in \text{col}(\bfV_{i}) \subseteq \text{row}(\bfJ)$, which proves~\eqref{eq:feature_space}.

Consider the full representation of the Hessian of~\eqref{eq:logsumexp_shift} generated by the Krylov subspace method
\begin{equation*}\label{eq:full_rank_repre}
    \nabla^2 q_{i}(\bfx_{i}) = \bfJ^\top (\bfH_{i} + \beta_{i} \bfI) \bfJ = \begin{bmatrix}
    \bfV_{i} & \bfU_{i}
    \end{bmatrix}
    \begin{bmatrix}
    \bfT_{i} & \bfD_1 \\
    \bfD_2 & \bfD_3
    \end{bmatrix}
    \begin{bmatrix}
    \bfV_{i}^\top \\
    \bfU_{i}^\top
    \end{bmatrix},
\end{equation*}
where ${\rm col}(\bfV_{i}) \perp {\rm col}(\bfU_{i})$. We have
\begin{align*} 
    \Delta \bfx_{i}^\top \nabla^2 q_i(\bfx_{i}) \Delta \bfx_{i} &= \Delta \bfx_{i}^\top \begin{bmatrix}
    \bfV_{i} & \bfU_{i}
    \end{bmatrix}
    \begin{bmatrix}
    \bfT_{i} & \bfD_1 \\
    \bfD_2 & \bfD_3
    \end{bmatrix}
    \begin{bmatrix}
    \bfV_{i}^\top \\
    \bfU_{i}^\top
    \end{bmatrix}
    \Delta \bfx_{i}  \\
    &= \begin{bmatrix}
    \Delta \bfx_{i}^\top \bfV_{i} & {\bf 0}
    \end{bmatrix}
    \begin{bmatrix}
    \bfT_{i} & \bfD_1 \\
    \bfD_2 & \bfD_3
    \end{bmatrix}
    \begin{bmatrix}
    \bfV_{i}^\top \Delta \bfx_{i} \\
    {\bf 0}
    \end{bmatrix}, \quad \text{as } \Delta \bfx_{i} \in {\rm col}(\bfV_{i}),  \\
    &=
    \Delta \bfx_{i}^\top \bfV_{i} \bfT_{i} \bfV_{i}^\top \Delta \bfx_{i}, 
\end{align*}
which proves~\eqref{eq:full_rank_equi}.
\end{proof}

\begin{lemma}[Descent Direction]\label{lemma:descent_cond}
The update $\Delta \bfx_{i}$ generated by~\eqref{eq:update_dir} satisfies the descent condition
\begin{equation}\label{eq:descent_cond}
\nabla f(\bfx_{i})^\top \Delta \bfx_{i} \leq - \Delta \bfx_{i}^\top \bfJ^\top (\bfH_{i} + \beta_{i} \bfI) \bfJ \Delta \bfx_{i}.
\end{equation}
\end{lemma}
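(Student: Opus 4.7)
The plan is to establish the descent inequality by direct algebraic manipulation using the explicit minimum-norm update formula \eqref{eq:update_dir} together with the identity \eqref{eq:full_rank_equi} just proved in \Cref{lemma:update_dir}. In fact, I expect to obtain the stronger statement of equality rather than inequality; the $\leq$ in the lemma will then follow trivially.

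First I would set $\bfy = \bfV_i^\top \nabla f(\bfx_i)$ so that \eqref{eq:update_dir} reads $\Delta \bfx_i = -\bfV_i \bfT_i^{-1} \bfy$, and in particular $\bfV_i^\top \Delta \bfx_i = -\bfT_i^{-1} \bfy$ since $\bfV_i$ has orthonormal columns. Substituting into the left-hand side of \eqref{eq:descent_cond} gives
\begin{equation*}
    \nabla f(\bfx_i)^\top \Delta \bfx_i = -\nabla f(\bfx_i)^\top \bfV_i \bfT_i^{-1} \bfV_i^\top \nabla f(\bfx_i) = -\bfy^\top \bfT_i^{-1} \bfy.
\end{equation*}
On the other hand, expanding the quantity from \eqref{eq:full_rank_equi} yields
\begin{equation*}
    \Delta \bfx_i^\top \bfV_i \bfT_i \bfV_i^\top \Delta \bfx_i = (-\bfT_i^{-1}\bfy)^\top \bfT_i (-\bfT_i^{-1}\bfy) = \bfy^\top \bfT_i^{-\top} \bfy.
\end{equation*}
These two expressions differ only by a sign provided $\bfT_i$ is symmetric, which is the one point requiring justification.

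To see that $\bfT_i$ is symmetric, I would invoke the block decomposition written in the proof of \Cref{lemma:update_dir}: since $\nabla^2 q_i(\bfx_i) = \bfJ^\top (\bfH_i + \beta_i \bfI)\bfJ$ is symmetric, the full block matrix sandwiched by $[\bfV_i\ \bfU_i]$ must also be symmetric, which forces $\bfT_i^\top = \bfT_i$ (equivalently, $\bfT_i = \bfV_i^\top \nabla^2 q_i(\bfx_i) \bfV_i$ by the Galerkin condition). Invertibility of $\bfT_i$ used implicitly in \eqref{eq:update_dir} follows because the Krylov subspace lies in $\mathrm{row}(\bfJ)$, on which $\nabla^2 q_i(\bfx_i)$ is positive definite for $\beta_i > 0$.

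Combining the two displays via $\bfT_i^{-\top}=\bfT_i^{-1}$ and then applying \eqref{eq:full_rank_equi} of \Cref{lemma:update_dir} gives
\begin{equation*}
    \nabla f(\bfx_i)^\top \Delta \bfx_i = -\Delta \bfx_i^\top \bfV_i \bfT_i \bfV_i^\top \Delta \bfx_i = -\Delta \bfx_i^\top \nabla^2 q_i(\bfx_i) \Delta \bfx_i = -\Delta \bfx_i^\top \bfJ^\top(\bfH_i + \beta_i \bfI)\bfJ \Delta \bfx_i,
\end{equation*}
which is exactly \eqref{eq:descent_cond} as an equality. The only genuine subtlety is the symmetry of $\bfT_i$; after that, the rest is bookkeeping.
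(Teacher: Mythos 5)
Your argument is correct, but it takes a genuinely different route from the paper. You compute both sides explicitly from the closed-form update $\Delta \bfx_i = -\bfV_i \bfT_i^{-1}\bfV_i^\top \nabla f(\bfx_i)$, reduce everything to $\bfy = \bfV_i^\top \nabla f(\bfx_i)$, and conclude via the symmetry and invertibility of $\bfT_i$ that \eqref{eq:descent_cond} holds with equality. Your justification of the two hypotheses you need is sound: $\bfT_i = \bfV_i^\top \nabla^2 q_i(\bfx_i)\bfV_i$ is symmetric because the Lanczos/Galerkin projection of a symmetric matrix is symmetric, and it is positive definite (hence invertible) because ${\rm col}(\bfV_i) \subseteq {\rm row}(\bfJ)$, on which $\bfJ^\top(\bfH_i+\beta_i\bfI)\bfJ$ is positive definite for $\beta_i>0$. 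The paper instead uses a variational argument: it compares the value of the quadratic model at $\Delta\bfx_i$ with its value at $t\Delta\bfx_i$ for $t\in(0,1)$, rearranges, and lets $t\to 1^-$ to obtain $\nabla f(\bfx_i)^\top\Delta\bfx_i \leq -\Delta\bfx_i^\top\bfV_i\bfT_i\bfV_i^\top\Delta\bfx_i$, then invokes \eqref{eq:full_rank_equi}. The trade-off is this: your computation yields the sharper conclusion (equality) and makes the mechanism fully explicit, but it leans on the specific minimum-norm formula \eqref{eq:update_dir} and on $\bfT_i$ being symmetric positive definite; the paper's argument uses only the fact that $\bfx_{i+1}$ minimizes the model, so it survives unchanged if the subproblem solver or the form of the low-rank approximation is altered, which is why this style of proof (inherited from the proximal Newton literature the authors cite) is preferred there. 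Either proof suffices for everything downstream, since only the inequality is used in \Cref{lemma:armijo_CG} and \Cref{thm:main}.
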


\begin{proof}[Proof of \Cref{lemma:descent_cond}]
Since $\bfx_{i+1}$ is a solution to~\eqref{eq:min_w_update}, for any $t \in (0,1)$, we have
\begin{equation*}
    \frac{1}{2} \Delta \bfx_{i}^\top \bfV_{i} \bfT_{i} \bfV_{i}^\top \Delta \bfx_{i} + \nabla f(\bfx_{i})^\top \Delta \bfx_{i} \leq \frac{1}{2} (t\Delta \bfx_{i})^\top \bfV_{i} \bfT_{i} \bfV_{i}^\top (t \Delta \bfx_{i}) + \nabla f(\bfx_{i})^\top (t \Delta \bfx_{i}).
\end{equation*}
By rearranging the terms, we have
\begin{align*}
    \frac{(1-t^2)}{2} \Delta \bfx_{i}^\top \bfV_{i} \bfT_{i} \bfV_{i}^\top \Delta \bfx_{i} + (1-t) \nabla f(\bfx_{i})^\top \Delta \bfx_{i} & \leq 0 \\
    \frac{(1+t)}{2} \Delta \bfx_{i}^\top \bfV_{i} \bfT_{i} \bfV_{i}^\top \Delta \bfx_{i} + \nabla f(\bfx_{i})^\top \Delta \bfx_{i} & \leq 0 \\
    \nabla f(\bfx_{i})^\top \Delta \bfx_{i} & \leq - \frac{(1+t)}{2} \Delta \bfx_{i}^\top \bfV_{i} \bfT_{i} \bfV_{i}^\top \Delta \bfx_{i}.
\end{align*}
Letting $t \to 1^-$, we obtain
\begin{equation}\label{eq:low_rank_ineq}
    \nabla f(\bfx_{i})^\top \Delta \bfx_{i} \leq - \Delta \bfx_{i}^\top \bfV_{i} \bfT_{i} \bfV_{i}^\top \Delta \bfx_{i}.
\end{equation}
Combining~\eqref{eq:full_rank_equi} and~\eqref{eq:low_rank_ineq}, we obtain~\eqref{eq:descent_cond}.
\end{proof}

In the following lemma, we will make use of the fact that $\nabla f$ is Lipschitz continuous. This is because the gradient of the log-sum-exp function is the softmax function, which is Lipschitz continuous~\cite{gao2017properties,kong2020rankmax}.
\begin{lemma}[Armijo Line Search Condition]\label{lemma:armijo_CG}
Let $\lambda_{\rm min}$ be the smallest nonzero eigenvalue of $\bfJ^\top \bfJ$, and $L$ be the Lipschitz constant for $\nabla f$. For line search parameter $\gamma \in (0,1)$ and
\begin{equation}\label{eq:step_size_CG}
    \beta_i \geq \frac{L}{2\lambda_{\rm min}(1-\gamma)},
\end{equation}
the following Armijo line search condition holds
\begin{equation}\label{eq:armijo_CG}
    f(\bfx_{i+1}) \leq f(\bfx_{i}) + \gamma \nabla f(\bfx_i)^\top (\bfx_{i+1}-\bfx_i).
\end{equation}
\end{lemma}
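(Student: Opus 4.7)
The plan is to combine three ingredients already established: the Lipschitz descent lemma (a standard consequence of $\nabla f$ being Lipschitz), the descent inequality from \Cref{lemma:descent_cond}, and the row-space containment from \Cref{lemma:update_dir}. The target is the Armijo inequality~\eqref{eq:armijo_CG}, which after rearranging becomes
\begin{equation*}
    f(\bfx_{i+1}) - f(\bfx_i) - \gamma \nabla f(\bfx_i)^\top \Delta \bfx_i \leq 0.
\end{equation*}

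First, I would invoke the standard descent lemma to write
\begin{equation*}
    f(\bfx_{i+1}) \leq f(\bfx_i) + \nabla f(\bfx_i)^\top \Delta \bfx_i + \tfrac{L}{2}\|\Delta \bfx_i\|_2^2.
\end{equation*}
Subtracting $f(\bfx_i) + \gamma \nabla f(\bfx_i)^\top \Delta \bfx_i$ from both sides, it suffices to prove
\begin{equation*}
    (1-\gamma)\nabla f(\bfx_i)^\top \Delta \bfx_i + \tfrac{L}{2}\|\Delta \bfx_i\|_2^2 \leq 0.
\end{equation*}
Next, I apply the descent condition~\eqref{eq:descent_cond} from \Cref{lemma:descent_cond} to bound the first term from above by $-(1-\gamma)\Delta \bfx_i^\top \bfJ^\top(\bfH_i + \beta_i \bfI)\bfJ \Delta \bfx_i$. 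Since $\bfH_i$ is positive semi-definite, this is in turn bounded above by $-(1-\gamma)\beta_i \Delta \bfx_i^\top \bfJ^\top \bfJ \Delta \bfx_i$.

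The key step, and the one that distinguishes this proof from the standard modified-Newton analysis, is to lower bound $\Delta \bfx_i^\top \bfJ^\top \bfJ \Delta \bfx_i$ by $\lambda_{\rm min}\|\Delta \bfx_i\|_2^2$. In general this fails because $\bfJ^\top \bfJ$ is rank-deficient, so its minimum eigenvalue is zero. However, \Cref{lemma:update_dir} guarantees $\Delta \bfx_i \in {\rm row}(\bfJ)$, so the component of $\Delta \bfx_i$ in the null space of $\bfJ^\top \bfJ$ vanishes, and on the orthogonal complement the Rayleigh quotient is indeed at least $\lambda_{\rm min}$. This is the heart of the argument, and it is precisely where the row-space property of LSEMINK's modified Hessian plays its role.

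Putting the pieces together yields the chain
\begin{equation*}
    (1-\gamma)\nabla f(\bfx_i)^\top \Delta \bfx_i + \tfrac{L}{2}\|\Delta \bfx_i\|_2^2 \leq \left[\tfrac{L}{2} - (1-\gamma)\beta_i \lambda_{\rm min}\right]\|\Delta \bfx_i\|_2^2,
\end{equation*}
and the hypothesis~\eqref{eq:step_size_CG} on $\beta_i$ makes the bracketed factor nonpositive, yielding~\eqref{eq:armijo_CG}. I do not expect any serious obstacle: the only subtle point is justifying the Rayleigh bound on a rank-deficient matrix using \Cref{lemma:update_dir}, which is straightforward once one recognizes that ${\rm row}(\bfJ)$ coincides with the eigenspace of $\bfJ^\top \bfJ$ associated with nonzero eigenvalues.
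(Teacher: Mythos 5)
Your proof is correct and follows essentially the same route as the paper's: the descent lemma from Lipschitz continuity of $\nabla f$, the bound $\nabla f(\bfx_i)^\top\Delta\bfx_i \le -\Delta\bfx_i^\top\bfJ^\top(\bfH_i+\beta_i\bfI)\bfJ\Delta\bfx_i$ from \Cref{lemma:descent_cond}, and the Rayleigh-quotient bound $\Delta\bfx_i^\top\bfJ^\top\bfJ\Delta\bfx_i \ge \lambda_{\rm min}\|\Delta\bfx_i\|_2^2$, which is valid because \Cref{lemma:update_dir} places $\Delta\bfx_i$ in ${\rm row}(\bfJ)={\rm null}(\bfJ^\top\bfJ)^\perp$. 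The paper arranges these as a single chain of inequalities rather than isolating the residual term, but the argument and all key ingredients are identical.
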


\begin{proof}[Proof of Lemma~\ref{lemma:armijo_CG}]
First, note that
\begin{equation}\label{eq:min_eig_ineq_CG}
    \| \bfJ (\bfx_{i+1} - \bfx_{i})\|^2_{\bfH_i + \beta_i \bfI} \geq \beta_i \| \bfJ (\bfx_{i+1} - \bfx_{i}) \|_2^2 \geq \beta_i \lambda_{\rm min} \| (\bfx_{i+1} - \bfx_{i}) \|_2^2.
\end{equation}
Here, in the second step we used that $(\bfx_{i+1} - \bfx_{i}) \in \text{row}(\bfJ) = \text{row}(\bfJ^\top \bfJ)$ (Lemma~\ref{lemma:update_dir}), $\text{row}(\bfJ^\top \bfJ)^\perp = \text{null}(\bfJ^\top \bfJ)$, and $\lambda_{\rm min}$ is the smallest nonzero eigenvalue of $\bfJ^\top \bfJ$. 
Next, we have
\begin{align*}
    f(\bfx_{i+1}) & \leq f(\bfx_{i}) + \nabla f(\bfx_i)^\top (\bfx_{i+1} - \bfx_{i}) + \frac{L}{2} \| \bfx_{i+1} - \bfx_i \|_2^2 \\
    & \leq f(\bfx_{i}) + \nabla f(\bfx_i)^\top (\bfx_{i+1} - \bfx_{i}) + {\beta_i \lambda_{\rm min} (1-\gamma)} \| \bfx_{i+1} - \bfx_i \|_2^2 \\
    & \leq f(\bfx_{i}) + \nabla f(\bfx_i)^\top (\bfx_{i+1} - \bfx_{i}) + {(1 - \gamma)} \| \bfJ (\bfx_{i+1} - \bfx_{i})\|^2_{\bfH_i + \beta_i \bfI} \\
    & \leq f(\bfx_{i}) + \nabla f(\bfx_i)^\top (\bfx_{i+1} - \bfx_{i}) - (1 - \gamma) \nabla f(\bfx_i)^\top (\bfx_{i+1} - \bfx_i ) \\
    & = f(\bfx_{i}) + \gamma \nabla f(\bfx_i)^\top (\bfx_{i+1} - \bfx_{i}).
\end{align*}
Here, the first, second, thrid, and fourth steps use the Lipschitz continuity of $\nabla f$, \eqref{eq:step_size_CG}, \eqref{eq:min_eig_ineq_CG}, and Lemma~\ref{lemma:descent_cond}, respectively.
\end{proof}

\begin{lemma}[Stationary Point]\label{lemma:fixed_point}
The iterative scheme~\eqref{eq:update_dir} generates a fixed point $\bfx_*$ if and only if $\bfx_*$ is a stationary point.
\end{lemma}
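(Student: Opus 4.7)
My plan is to prove the two directions of the equivalence separately, in both cases relying crucially on the explicit formula $\Delta \bfx_i = -\bfV_i \bfT_i^{-1} \bfV_i^\top \nabla f(\bfx_i)$ from~\eqref{eq:update_dir} together with the observation that $\nabla q_i(\bfx_i) = \nabla f(\bfx_i)$, so the starting vector of the Krylov subspace in~\eqref{eq:sim_Krylov} is precisely $\nabla f(\bfx_i)$.

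The forward direction ``stationary $\Rightarrow$ fixed point'' is essentially immediate: if $\nabla f(\bfx_*) = 0$, then the right-hand side of~\eqref{eq:update_dir} vanishes, so $\Delta \bfx_* = 0$ and $\bfx_{*+1} = \bfx_*$. (Alternatively, one notes that the seed vector of the Krylov recursion is zero, so the subspace is trivial and no update is produced.)

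For the reverse direction ``fixed point $\Rightarrow$ stationary'', the key observation is that, by the very definition of the Krylov subspace $\mathcal{K}_r(\nabla^2 q_*(\bfx_*), \nabla f(\bfx_*))$, the vector $\nabla f(\bfx_*)$ lies in its range. Hence $\nabla f(\bfx_*) \in {\rm col}(\bfV_*)$ and consequently $\bfV_* \bfV_*^\top \nabla f(\bfx_*) = \nabla f(\bfx_*)$. Assuming $\bfx_*$ is a fixed point so that $\Delta \bfx_* = 0$, the formula gives $\bfV_* \bfT_*^{-1} \bfV_*^\top \nabla f(\bfx_*) = 0$; since $\bfV_*$ has orthonormal columns (hence trivial null space) and $\bfT_*$ is invertible, applying $\bfV_*^\top$ then canceling $\bfT_*^{-1}$ yields $\bfV_*^\top \nabla f(\bfx_*) = 0$, and multiplying again by $\bfV_*$ gives $\nabla f(\bfx_*) = \bfV_* \bfV_*^\top \nabla f(\bfx_*) = 0$.

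The main subtlety I expect to address is the invertibility of the projected matrix $\bfT_*$, which is implicit in writing $\Delta \bfx_* = -\bfV_* \bfT_*^{-1}\bfV_*^\top \nabla f(\bfx_*)$. This is guaranteed in a standard Krylov solver applied to the shifted system because $\nabla^2 q_*(\bfx_*) = \bfJ^\top(\bfH_* + \beta_*\bfI)\bfJ$ is positive definite on $\mathrm{row}(\bfJ)$ (for $\beta_*>0$), which contains the whole Krylov subspace, so $\bfT_* = \bfV_*^\top \nabla^2 q_*(\bfx_*) \bfV_*$ is positive definite. If a cleaner route is preferred, one can bypass $\bfT_*^{-1}$ entirely by recalling that $\bfx_{*+1}$ is the minimum-norm minimizer of~\eqref{eq:min_w_update}: a fixed point means $\bfx_i$ itself attains this minimum, which forces $\nabla f(\bfx_*)$ to be orthogonal to the Krylov subspace; since $\nabla f(\bfx_*)$ already lies in that subspace, it must vanish.
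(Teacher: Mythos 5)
Your proof is correct, and the forward direction (stationary $\Rightarrow$ fixed point) coincides with the paper's. For the nontrivial reverse direction, however, you take a genuinely different route. You argue algebraically from the explicit update formula: since the Krylov subspace is seeded with $\nabla f(\bfx_*)$, you have $\nabla f(\bfx_*) \in {\rm col}(\bfV_*)$, and then $\Delta \bfx_* = -\bfV_* \bfT_*^{-1} \bfV_*^\top \nabla f(\bfx_*) = {\bf 0}$ forces $\bfV_*^\top \nabla f(\bfx_*) = {\bf 0}$ and hence $\nabla f(\bfx_*) = \bfV_* \bfV_*^\top \nabla f(\bfx_*) = {\bf 0}$. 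This requires the invertibility of $\bfT_*$, which you correctly justify from the positive definiteness of $\bfJ^\top(\bfH_* + \beta_* \bfI)\bfJ$ on ${\rm row}(\bfJ) \supseteq {\rm col}(\bfV_*)$ for $\beta_* > 0$. The paper instead uses a variational argument: it exploits only that a fixed point means $\bfx_*$ minimizes the quadratic model \eqref{eq:min_w_update}, compares the objective at $\bfx_*$ with its value at $\bfx_* + t\bfv$, and lets $t \to 0$ to conclude $\nabla f(\bfx_*)^\top \bfv \geq 0$ for all $\bfv$, hence $\nabla f(\bfx_*) = {\bf 0}$. The paper's route needs neither $\bfT_*^{-1}$ nor the containment $\nabla f(\bfx_*) \in {\rm col}(\bfV_*)$, and it mirrors the style of the proof of the descent-direction lemma (\Cref{lemma:descent_cond}); your route makes the role of the Krylov seed more transparent and pinpoints exactly which structural facts (orthonormality of $\bfV_*$, definiteness of $\bfT_*$) are doing the work. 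Your suggested alternative at the end, bypassing $\bfT_*^{-1}$ via the minimizer characterization, is essentially the paper's argument.
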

\begin{proof}[Proof of~\Cref{lemma:fixed_point}]
"$\Leftarrow$": Substituting $\nabla f(\bfx_*) = {\bf 0}$ into~\eqref{eq:update_dir}, we obtain $\Delta \bfx_*=0$. Hence $\bfx_*$ is a fixed point.

\noindent "$\Rightarrow$": Let $\bfv = \bfx - \bfx_*$ for any $\bfx$. Since $\bfx_*$ is a fixed point to~\eqref{eq:min_w_update}, we have, for any $t \in \mathbb{R}$,
\begin{align*}
    & \quad \;  \frac{1}{2} (t\bfv)^\top \bfV_* \bfT_* \bfV_*^\top (t \bfv) + \nabla f(\bfx_*)^\top (t \bfv) \\
    & \geq \frac{1}{2} (\bfx_*-\bfx_*)^\top \bfV_* \bfT_* \bfV_*^\top (\bfx_*-\bfx_*) + \nabla f(\bfx_*)^\top (\bfx_*-\bfx_*).
\end{align*}
Simplifying this, we obtain
\begin{align*}
    \frac{t^2}{2} \bfv^\top \bfV_* \bfT_* \bfV_*^\top \bfv + t \nabla f(\bfx_*)^\top  \bfv & \geq {\bf 0} \nonumber \\
    \nabla f(\bfx_*)^\top  \bfv & \geq - \frac{t}{2} \bfv^\top \bfV_* \bfT_* \bfV_*^\top \bfv. 
\end{align*}
Taking $t \to 0$, we obtain $\nabla f(\bfx_*)^\top  \bfv \geq 0$ for any $\bfv$.
This implies $\nabla f(\bfx_*)$ is a zero vector, that is, $\bfx_*$ is a stationary point.
\end{proof}

Now, we are ready to prove the main theorem.
\begin{proof}[Proof of~\Cref{thm:main}]
The sequence $\{ f({\bfx_i}) \}_i$ is decreasing because the update directions are descent directions~(\Cref{lemma:descent_cond}) and the Armijo line search scheme guarantees sufficient descent at each step~(\Cref{lemma:armijo_CG}). By the continuity of $f$, it is closed~\cite[Proposition~1.1.2]{bertsekas2009convex}. Since $f$ is closed and attains its infimum in $\mathbb{R}$,
the decreasing sequence $\{ f({\bfx_i}) \}_i$ converges to a limit.

By the sufficient descent condition~\eqref{eq:armijo_CG}, the convergence of $\{ f({\bfx_i}) \}_i$ and $\alpha >0$,
\begin{equation*}
    \nabla f(\bfx_i)^\top (\bfx_{i+1} - \bfx_{i})
\end{equation*}
converges to zero. Hence, by~\eqref{eq:descent_cond}, 
\begin{equation*}
    \Delta \bfx_i^\top \bfJ^\top (\bfH_i + \beta_i \bfI) \bfJ \Delta \bfx_i 
\end{equation*}
converges to zero. Since $(\bfH_i + \beta_i \bfI)$ is positive definite and $\Delta \bfx_i \in \text{row}(\bfJ)$ (\Cref{lemma:update_dir}), $\Delta \bfx_i$ converges to the zero vector.

This implies that $\bfx_i$ converges to a fixed point of~\eqref{eq:update_dir}. By~\Cref{lemma:fixed_point}, $\bfx_i$ converges to a stationary point. By the convexity of $f$, $\bfx_i$ converges to a global minimum.
\end{proof}

\section{Numerical Experiments}\label{sec:experiments}
We perform two numerical experiments for minimizing the log-sum-exp function for a linear model. We compare the performance of the proposed LSEMINK with three commonly applied line search iterative methods and three disciplined convex programming (DCP) solvers; see~\Cref{subsec:benchmark}. In~\Cref{subsec:classification}, we consider multinomial logistic regression (MLR) arising in image classification. In~\Cref{subsec:LSE_experiment}, we experiment with a log-sum-exp minimization problem arising in geometric programming.
The experimental results show that LSEMINK has much better initial convergence, is more robust and scalable compared with the comparing methods.

\subsection{Benchmark Methods}\label{subsec:benchmark}
We compare the proposed LSEMINK with three common line search iterative schemes and three DCP solvers for machine learning and geometric programming applications. Firstly, we implement a standard Newton-CG (NCG) algorithm with a backtracking Armijo line search. Secondly, we compare with an $L^2$ natural gradient descent (NGD) method~\cite{NurbekyanEtAl2022,pascanu2013revisiting} that approximately solves
\begin{equation*}
    \min_{\bfx} \frac{1}{2} \nabla f (\bfx_i)^\top (\bfx - \bfx_i) + \frac{\lambda_i}{2} \sum_{k=1}^N w^{(k)} \| \bfJ^{(k)} (\bfx - \bfx_i)\|_2^2,
\end{equation*}
using CG to obtain the next iterate, where $\lambda_i$ controls the step size and is determined by a backtracking Armijo line search scheme, and the last term is a proximal term acting on the row space of the linear model. This scheme bears similarity to LSEMINK as the proximal term has the same effect as the shift in Hessian of LSEMINK. However, it does not make use of the Hessian and only approximates curvature information using the linear model. Thirdly, to demonstrate the effectiveness of the Hessian modification in LSEMINK, we compare with a standard modified Newton-Krylov (SMNK) scheme, which approximately solves~\eqref{eq:logsumexp_shift_general} with $\bfM_i=\bfI$ using Lanczos tridiagonalization, which has the same iterates as CG up to rounding errors but allows computations for the update direction to be re-used during line search. For LSEMINK, the Newton equation~\eqref{eq:Newton_eq} is approximately solved by CG. We note that an update direction has to be re-computed for each attempted value of $\beta_i$ during line search. In other words, unlike SMNK, the update direction computation cannot be re-used. However, our experimental results show that LSEMINK is still efficient in terms of computational cost thanks to the effectiveness of the modified Hessian. In each experiment, we use the same maximum number of iterations and tolerance for the CG and Lanczos schemes across different line search iterative methods.

In addition, we apply CVX~\cite{grant2008cvx}, a DCP package, paired with three different backend solvers (SPDT3~\cite{tutuncu2003solving}, SeDuMi~\cite{sturm1999using}, and MOSEK~\cite{aps2019mosek}). The best precision for CVX is used in the experiments; see~\cite{grant2008cvx} for detailed information.

\paragraph{Cost Measurement} We measure the computational costs for different line search iterative methods in terms of work units. In particular, a work unit represents a matrix-vector product with the linear models or their transpose. This is because these computations are usually the most expensive steps during optimization. For instance, in the MLR experiments of~\Cref{subsec:classification}, the linear models $\bfJ^{(k)}$'s contain the propagated high dimensional features of all the training data. Note that the number of work units in one iteration can differ across different line search iterative methods since a different number of CG/Lanczos iterations or line search updates can be performed. In addition to work unit, we also compare computational costs for all methods in total runtime.

\subsection{Experiment 1: Image Classification}\label{subsec:classification}
Perhaps the most prominent example of log-sum-exp minimization is multinomial logistic regression (MLR) arising in supervised classification. Here, we experiment on an MLR problem for the classification of MNIST~\cite{lecun1998mnist} and CIFAR-10~\cite{krizhevsky2009learning} image datasets. The MNIST dataset consists of $60,000$ $28 \times 28$ hand-written images for digits from 0 to 9. The CIFAR-10 consists of $60,000$ $32 \times 32$ color images equally distributed for the following ten classes: airplane, automobile,
bird, cat, deer, dog, frog, horse, ship, and truck. Example images for the two datasets are shown in~\Cref{fig:MNISTExample} and~\Cref{fig:CIFAR10Example}, respectively.

\begin{figure}[t]
    \centering
    \includegraphics[width=0.8\textwidth, height=1.3in]{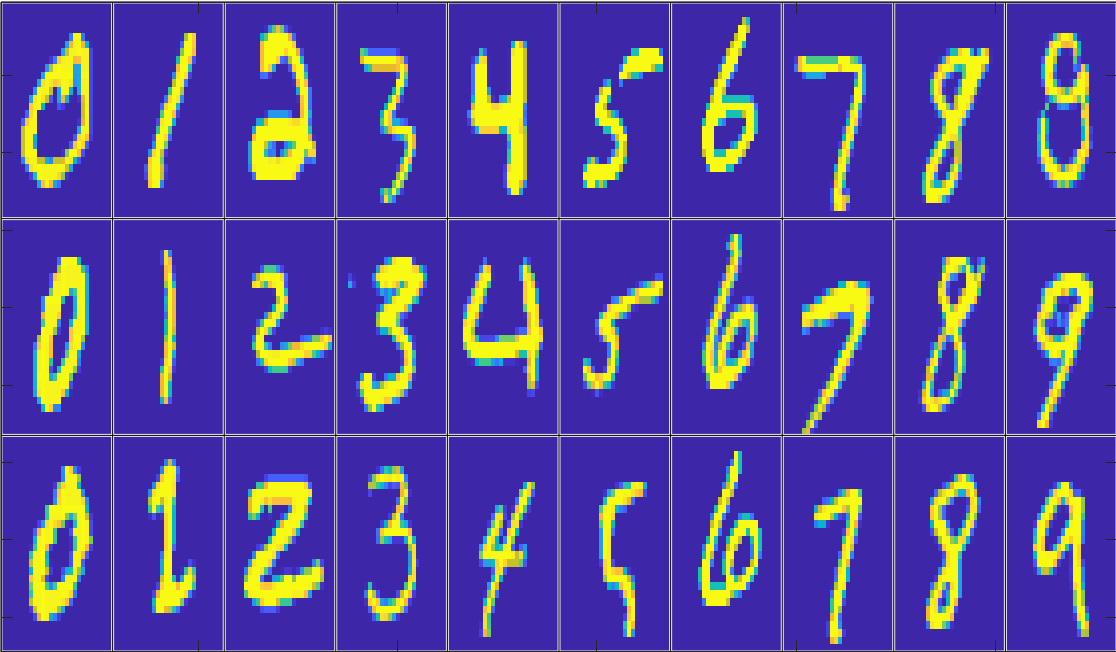}
    \caption{\small{Example images from the MNIST data set}}
    \label{fig:MNISTExample}
\end{figure}
\begin{figure}[t]
    \centering
    \begin{tabular}{cc}
    airplane
    \vspace{2.3mm} 
    &
    \multirow{9}{*}{\includegraphics[width=0.65\textwidth, height=2.75in]{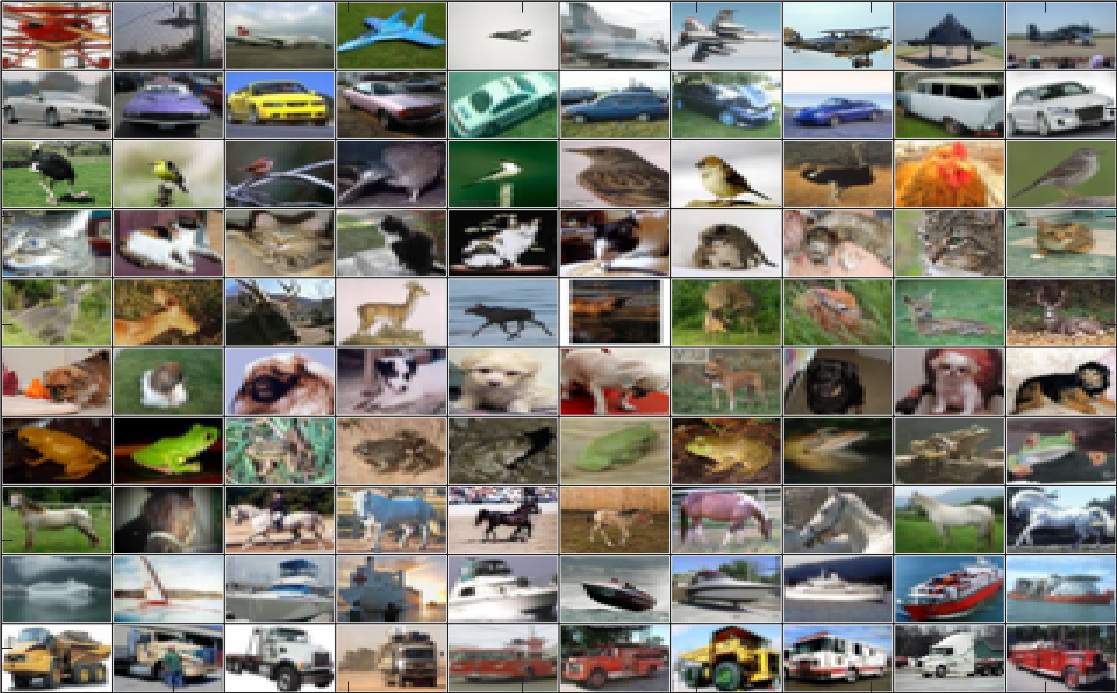}}
    \\
    automobile
    \vspace{2.3mm} &
    \\
    bird
    \vspace{2.3mm} &
    \\ 
    cat
    \vspace{2.3mm} & 
    \\
    deer
    \vspace{2.3mm} & 
    \\
    dog
    \vspace{2.3mm} & 
    \\
    frog
    \vspace{2.3mm} & 
    \\
    horse
    \vspace{2.3mm} &
    \\
    ship
    \vspace{2.3mm} & 
    \\
    truck
    \vspace{2.3mm} &
    \\
    \end{tabular}
    \label{CFAR10Imgs}
    \vspace{1em}
    \caption[Example images from the CIFAR-10 dataset]{{Example images for the CIFAR-10 dataset}\label{fig:CIFAR10Example}}
\end{figure}

\paragraph{Problem Description}
Let $n_f$ be the number of features, $n_c$ be the number of classes, and $\Delta_{n_c}$ be the $n_c$-dimensional unit simplex. Denote a set of data by $\{ \bfy^{(k)}, \bfc^{(k)} \}_{k=1}^N \subset \mathbb{R}^{n_f} \times \Delta_{n_c}$, where $\bfy^{(k)}$ and $\bfc^{(k)}$ are the input feature and target output label, respectively. In our experiments, we consider two feature extractors that enhance the features $\bfy^{(k)}$ by propagating it into a higher dimensional space $\mathbb{R}^{n_p}$. The first feature extractor is the random feature model (RFM)~\cite{huang2006extreme,rahimi2007random}. It applies a nonlinear transformation given by
\begin{equation*}
    \bfa_{\rm RFM}(\bfy^{(k)}) = \sigma(\bfZ \bfy^{(k)} + \bfb),
\end{equation*}
where $\sigma$ is the element-wise ReLU activation function, $\bfZ \in \mathbb{R}^{n_p \times n_f}$ and $\bfb \in \mathbb{R}^{n_p}$ are randomly generated. The second feature extractor is performed by propagating the features through the hidden layers of a pre-trained AlexNet~\cite{krizhevsky2017imagenet}. In particular, the AlexNet was pre-trained on the ImageNet dataset~\cite{deng2009imagenet}, which is similar to the CIFAR-10 dataset, using MATLAB's deep neural networks toolbox. This procedure is also known as transfer learning. These feature extractors can empirically enhance the generalization of the model, i.e., the ability to classify unseen data correctly.

The goal of the supervised classification problem is to train a softmax classifier
\begin{equation}\label{eq:softmax_classifier}
    s(\bfX, \bfa(\bfy^{(k)})) = \frac{ \exp(\bfX \bfa(\bfy^{(k)}))}{ {\bf 1}_{n_c} {\bf 1}_{n_c}^\top \exp(\bfX \bfa(\bfy^{(k)}))}
\end{equation}
such that $s(\bfX, \bfa(\bfy^{(k)})) \approx \bfc^{(k)}$. Here $\bfX$ are model parameters, the $\exp$ and division are applied element-wise, ${\bf 1}_{n_c}$ is an $n_c$-dimensional vector of all ones, and $\bfa: \mathbb{R}^{n_f} \to \mathbb{R}^{n_p}$ is a feature extractor.
To this end, we first consider the sample average approximation (SAA)~\cite{kleywegt2002,nemirovski2009,kim2015guide} of an MLR problem formulated as
\begin{align*}\label{eq:MLR_vector}
\begin{split}    
    \min_{\bfX \in \mathbb{R}^{n_c \times n_p}}  F(\bfX) &= - \frac{1}{N} \sum_{k=1}^N {\bfc^{(k)}}^\top \log \left( s(\bfX, \bfa(\bfy^{(k)})) \right) \\
    &=  \frac{1}{N} \sum_{k=1}^{N} \left[ ({\bfc^{(k)}}^\top {\bf 1}_{n_c}) \log \left({\bf 1}_{n_c}^\top \exp(\bfX \bfa(\bfy^{(k)})) \right) - {\bfc^{(k)}}^\top \bfX \bfa(\bfy^{(k)}) \right] \\
    &= \frac{1}{N} \sum_{k=1}^{N} \left[ \log \left({\bf 1}_{n_c}^\top \exp(\bfX \bfa(\bfy^{(k)})) \right) - {\bfc^{(k)}}^\top \bfX \bfa(\bfy^{(k)}) \right], 
\end{split}
\end{align*}
where the $\log$ operation is applied element-wise, and we use the fact that ${\bfc^{(k)}}^\top {\bf 1}_{n_c}=1$ since $\bfc^{(k)} \in \Delta_{n_c}$. 
The feature extractor is assumed to be fixed since the focus is on the log-sum-exp minimization problem. 
We vectorize the variable $\bfx = {\rm vec} (\bfX)$ so that the MLR problem becomes
\begin{align*}
    \min_{\bfx \in \mathbb{R}^{n_cn_p}} f(\bfx) =\frac{1}{N} \sum_{k=1}^{N} \left[ \log \left({\bf 1}_{n_c}^\top \exp(\bfJ^{(k)} \bfx) \right) - {\bfc^{(k)}}^\top \bfJ^{(k)} \bfx \right],
\end{align*}
which is of the form of~\eqref{eq:general_f} and where $\bfJ^{(k)} = \bfa(\bfy^{(k)})^\top \otimes \bfI_{n_c}$.

\paragraph{Experimental Results} In the MLR experiments, the line search iterative solvers stop when the norm of gradient is below $10^{-14}$ or after 3,000 work units. We stop the CG and Lanczos scheme when the norm of the relative residual drops below $10^{-3}$ or after 20 iterations. 

We first perform a small-scale experiment in which only $N=100$ training data is used, and a random feature model with dimension $m=1,000$ is applied. Since under this setup the data can be fit perfectly to achieve a zero training error, the model predictions~\eqref{eq:softmax_classifier} are close to standard basis vectors near an optimum. In this situation, the Hessian is close to a zero matrix, and the robustness of the solvers can be tested. The results are reported in~\Cref{table:ToyMLR} and~\Cref{fig:ToyMLR}. In~\Cref{table:ToyMLR}, one of the results for the standard Newton-CG scheme is not shown, as it fails to converge near the end. This is because the Hessian vanishes and consequently, the second-order approximation is unbounded from below. The natural gradient descent method has the slowest convergence and has yet to converge at the end. Both the standard modified Newton-Krylov method and LSEMINK achieve the stopping criteria under the specified work units. In particular, LSEMINK has superior convergence where the objective function value is up to five orders of magnitude smaller than the second-best method during optimization. LSEMINK also has the fastest time-to-solution. This demonstrates the effectiveness of LSEMINK and the efficacy of its modified Hessian over the standard one. SeDuMi, particularly SDPT3, can achieve very accurate results, but their runtime is about 15 times more than the LSEMINK. MOSEK fails to obtain a solution.

We then experiment with $n=50,000$ training data and $10,000$ validation data. For the MNIST dataset, we use an RFM to propagate the features to an $m=1,000$-dimensional space. For the CIFAR-10 dataset, features with dimension $m=9,216$ are extracted from the {\it{pool5}} layer of a pre-trained AlexNet. Here different feature extractors are used for the two datasets because a better validation accuracy can be achieved. In~\Cref{fig:MLR}, the results for an MLR problem are illustrated. In~\Cref{fig:MLR_reg}, we report the performance for an MLR problem with a Tikhonov regularization term $\frac{\alpha}{2} \| \bfx \|_{2}^2$, where $\alpha = 10^{-3}$. 
Using our state-of-the-art laptop, the CVX solvers cannot complete the experiments within thirty minutes, while the line search methods finish in thirty seconds.
Hence, we focus on the latter methods in this test. The figures show that the $L^2$ natural gradient descent method is the slowest. The standard Newton-CG and standard modified Newton-Krylov have good convergence results on one dataset but not the other. In contrast, LSEMINK is very competitive on both datasets. Specifically, it has good initial convergence where the objective function value is up to an order of magnitude smaller than the second-best scheme in the first few iterations. Moreover, its 
results are comparable with the other methods in terms of final training error, training accuracy, validation accuracy, and norm of gradient. 

\begin{table}[t]
\caption{Results on small-scale MLR experiments described in~\Cref{subsec:classification} in which the propagated random features have dimension $m=1,000$ and $N=100$ training data are used. The final objective function value, norm of gradient, and total runtime are reported. Some results are not shown because the corresponding scheme fails to return a solution. The tests are run on an Apple Macbook Pro with a 10-core M1 Max CPU and 32 GB of memory, and the software platform is MATLAB R2022a. \label{table:ToyMLR}}
\hspace{-4em}
\begin{tabular}{|c|c|c|c|c|c|c|c|c|}
\multicolumn{9}{c}{\hspace*{12pt}} \\
\hline
Dataset                  &                   & NCG      & NGD      & SMNK       & LSEMINK       & SeDuMi   & SDPT3     & MOSEK \\ \hline
\multirow{3}{*}{MNIST}   & $f$               & -- & 1.54e-02 & 1.41e-15 & 8.37e-16 & 6.65e-15 & 0.00e+00  & --   \\ \cline{2-9} 
                         & $\| \nabla f\|_2$ & -- & 1.33e-01 & 2.68e-15 & 7.05e-15 & 2.05e-14 & 5.14e-140 & --   \\ \cline{2-9} 
                         & Time              & --    & 2.58s    & 3.03s    & 1.70s    & 37.88s   & 28.51s    & --   \\ \hline
\multirow{3}{*}{CIFAR-10} & $f$               & 1.31e-15 & 1.27e-02 & 4.26e-15 & 8.77e-16 & 7.93e-15 & 0.00e+00  & --   \\ \cline{2-9} 
                         & $\| \nabla f\|_2$ & 6.16e-15 & 7.43e-02 & 6.58e-15 & 7.33e-15 & 2.11e-14 & 1.65e-212  & --   \\ \cline{2-9} 
                         & Time              & 1.95s    & 2.60s    & 3.02s    & 1.69s    & 31.60s   & 36.33s    & --   \\ \hline
\end{tabular}
\end{table}

\begin{figure}
\centering
\includegraphics[width=1\textwidth]{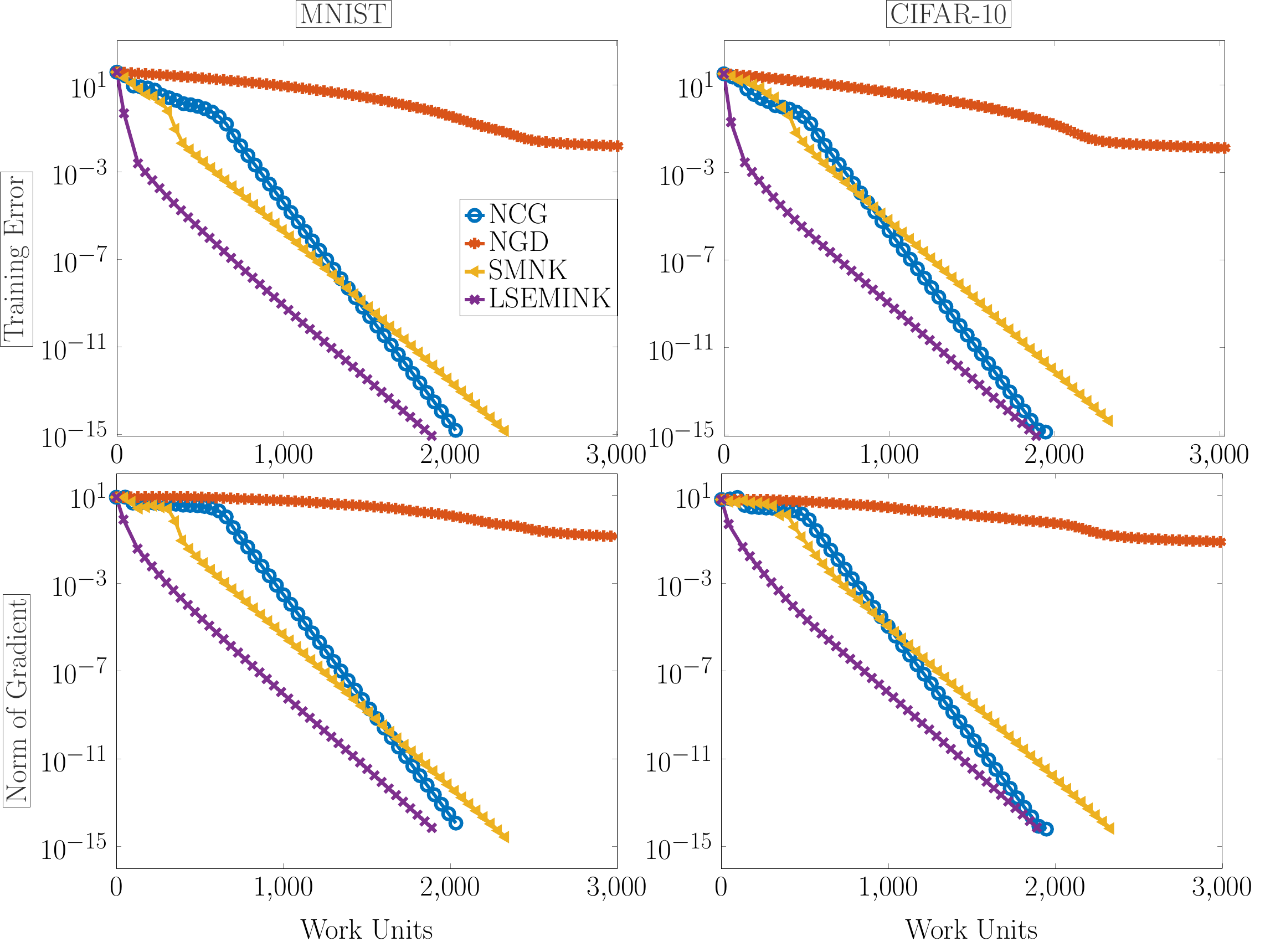}
\caption{Experimental results on small-scale MLR experiments in which the propagated random features have dimensions $n_p=1,000$ and $N=100$ training data are used.\label{fig:ToyMLR}}
\end{figure}	

\begin{figure}
\centering
\includegraphics[width=0.85\textwidth]{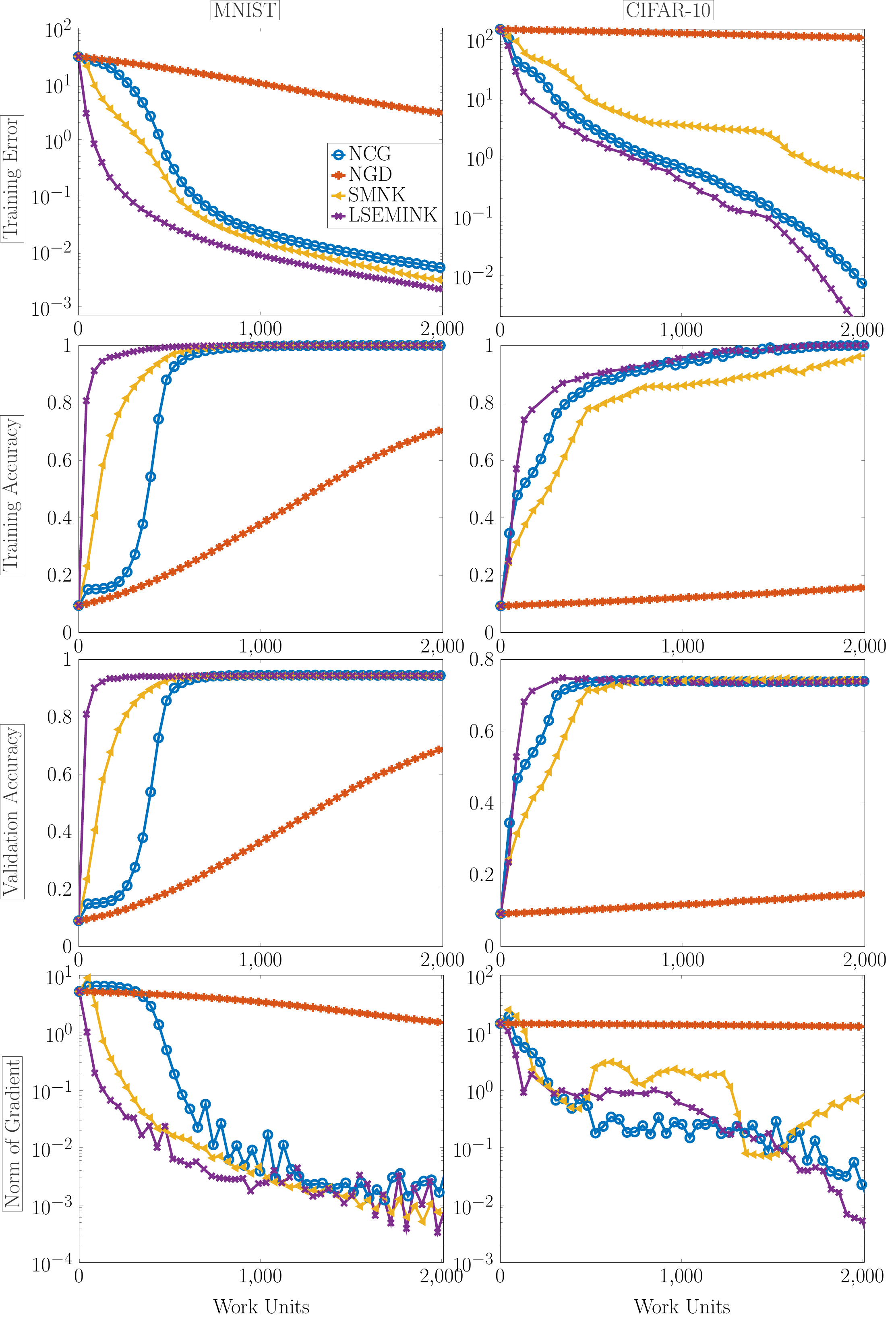}
\caption{Experimental results on MLR without regularization. The $x$-axes report the number of work units. Here $N=50,000$ training data and $10,000$ validation data are used.\label{fig:MLR}}
\end{figure}	

\begin{figure}
\centering
\includegraphics[width=0.85\textwidth]{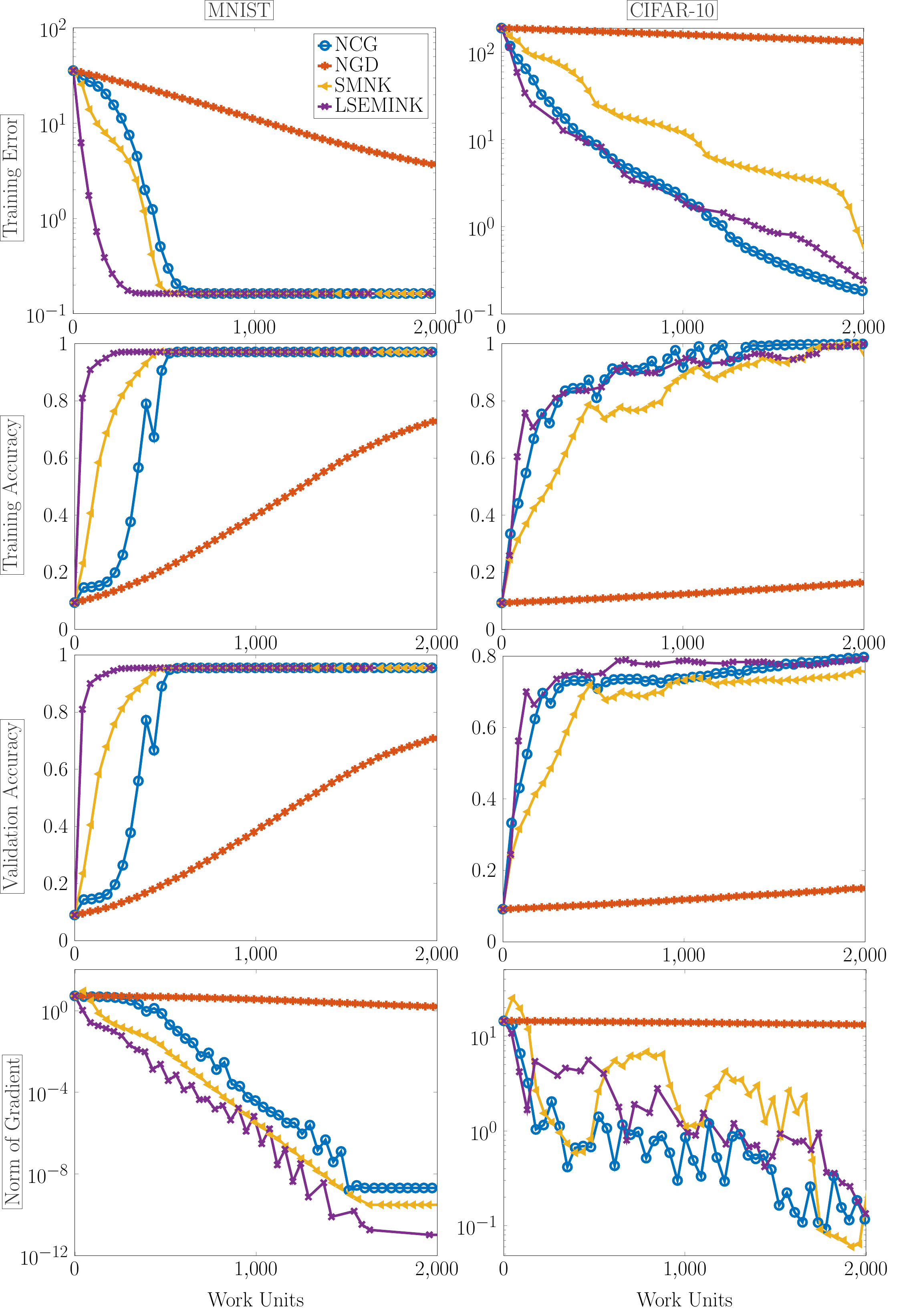}
\caption{Experimental results on MLR with a Tikhonov regularization $\frac{\alpha}{2} \| \bfx \|_2^2$, with $\alpha=10^{-3}$. The $x$-axes report the number of work units. Here $N=50,000$ training data and $10,000$ validation data are used. \label{fig:MLR_reg}}
\end{figure}	

\subsection{Experiment 2: Geometric Programming}\label{subsec:LSE_experiment}
We consider a log-sum-exp minimization problem which commonly arises in geometric programming~\cite{tseng2015milp,xi2020log,zhan2018accelerated} and is used to test optimization algorithms~\cite{kim2018adaptive,o2015adaptive}. In particular, it is formulated as
\begin{equation*}
    \min_{\bfx} {\eta} \log \left( {\bf 1}_m^\top \exp((\bfJ \bfx + \bfb)/\eta) \right),
\end{equation*}
where $\bfx \in \mathbb{R}^{n}$, $\bfJ \in \mathbb{R}^{m \times n}$, and $\eta$ controls the smoothness of the problem.
In particular, when $\eta \to 0$ the objective function converges to the point-wise maximum function $\max (\bfJ \bfx + \bfb)$ and its Hessian vanishes.

We follow the experimental setups in~\cite{kim2018adaptive,o2015adaptive}, which use $m=100$, $n=20$, and generate the entries of $\bfJ$ and $\bfb$ randomly. We perform the experiments with small values of $\eta$ to test the robustness of the methods. In particular, we test with $\eta=10^{-5}$,  $10^{-3}$, and $10^{-1}$, respectively. We stop the line search iterative schemes after $10,000$ work units. The CG and Lanczos schemes stop when the relative residual drops below $10^{-3}$ or after 20 iterations.

The experimental results are shown in~\Cref{table:LogSumExp} and~\Cref{fig:LogSumExp}. We see that the experiments are very challenging as the standard Newton-CG and all the CVX solvers cannot return a solution in some or all the experiments. In particular, the standard Newton-CG breaks in the first iteration in two of the experiments. This is because the quadratic approximation is unbounded from below. Both SeDuMi and SDPT3 fail in some of the experiments. MOSEK fails in all the experiments. When the CVX solvers succeed in returning a solution, they have significantly longer runtime (up to 60 times slower) compared to the line search methods. Similar to the previous experiments, $L^2$ natural gradient descent method has the slowest convergence and has yet to converge after the specified work units. The standard modified Newton-Krylov and LSEMINK are robust in the experiments and can return accurate solutions for $\eta=10^{-3}$ and $10^{-1}$. This indicates the effectiveness of Hessian modification in handling challenging optimization problems. Moreover, LSEMINK converges faster than the comparing standard modified Newton-Krylov method in the early stage. This indicates the effectiveness of the proposed Hessian modification over the standard one. However, we see that when $\eta=10^{-3}$ and $10^{-5}$, LSEMINK and all comparing methods cannot return a solution with the desired norm of gradient. This is because for a small $\eta$, the objective function is close to being nonsmooth. In contrast, the convergence of gradient based methods like LSEMINK requires the differentiability of the objective function.

\begin{table}[t]
\caption{Results on geometric programming experiments described in~\Cref{subsec:LSE_experiment}. The final objective function value, norm of gradient and total runtime are reported. Some results are not shown because the corresponding scheme fails to return a solution. The tests are run on an Apple Macbook Pro with a 10-core M1 Max CPU and 32 GB of memory, and the software platform is MATLAB R2022a. \label{table:LogSumExp}}
\hspace{-3em}
\begin{tabular}{|c|c|c|c|c|c|c|c|c|}
\multicolumn{9}{c}{\hspace*{12pt}} \\
\hline
$\eta$                &                   & NCG & NGD      & SMNK       & LSEMINK       & SeDuMi   & SDPT3    & MOSEK \\ \hline
\multirow{3}{*}{1e-5} & $f$               & -- & 7.48e+00 & 2.47e+00 & 1.44e+00 & 9.45e-01 & 9.45e-01      & --   \\ \cline{2-9} 
                      & $\| \nabla f\|_2$ & -- & 4.68e+00 & 5.96e+00 & 5.65e+00 & 5.76e-03 & 2.52e-06      & --   \\ \cline{2-9} 
                      & Time              & -- & 0.44s    & 1.43s    & 0.38s    & 2.18s    & 8.40s      & --   \\ \hline
\multirow{3}{*}{1e-3} & $f$               & -- & 7.37e+00 & 9.48e-01 & 9.48e-01 & -- & 9.48e-01 & --   \\ \cline{2-9} 
                      & $\| \nabla f\|_2$ & -- & 4.68e+00 & 3.80e-13 & 7.50e-11 & -- & 2.51e-06 & --   \\ \cline{2-9} 
                      & Time              & -- & 0.40s    & 0.80s    & 0.27s    & --    & 17.85s    & --   \\ \hline
\multirow{3}{*}{1e-1} & $f$               & 1.24e+00 & 2.43e+00 & 1.24e+00 & 1.24e+00 & --      & -- & --   \\ \cline{2-9} 
                      & $\| \nabla f\|_2$ & 2.72e-15 & 1.88e+00 & 2.38e-15 & 3.65e-15 & --      & -- & --   \\ \cline{2-9} 
                      & Time              & 0.09s & 0.35s    & 0.02s    & 0.02s    & --      & --   & --   \\ \hline
\end{tabular}
\end{table}

\begin{figure}
\centering
\includegraphics[width=1\textwidth]{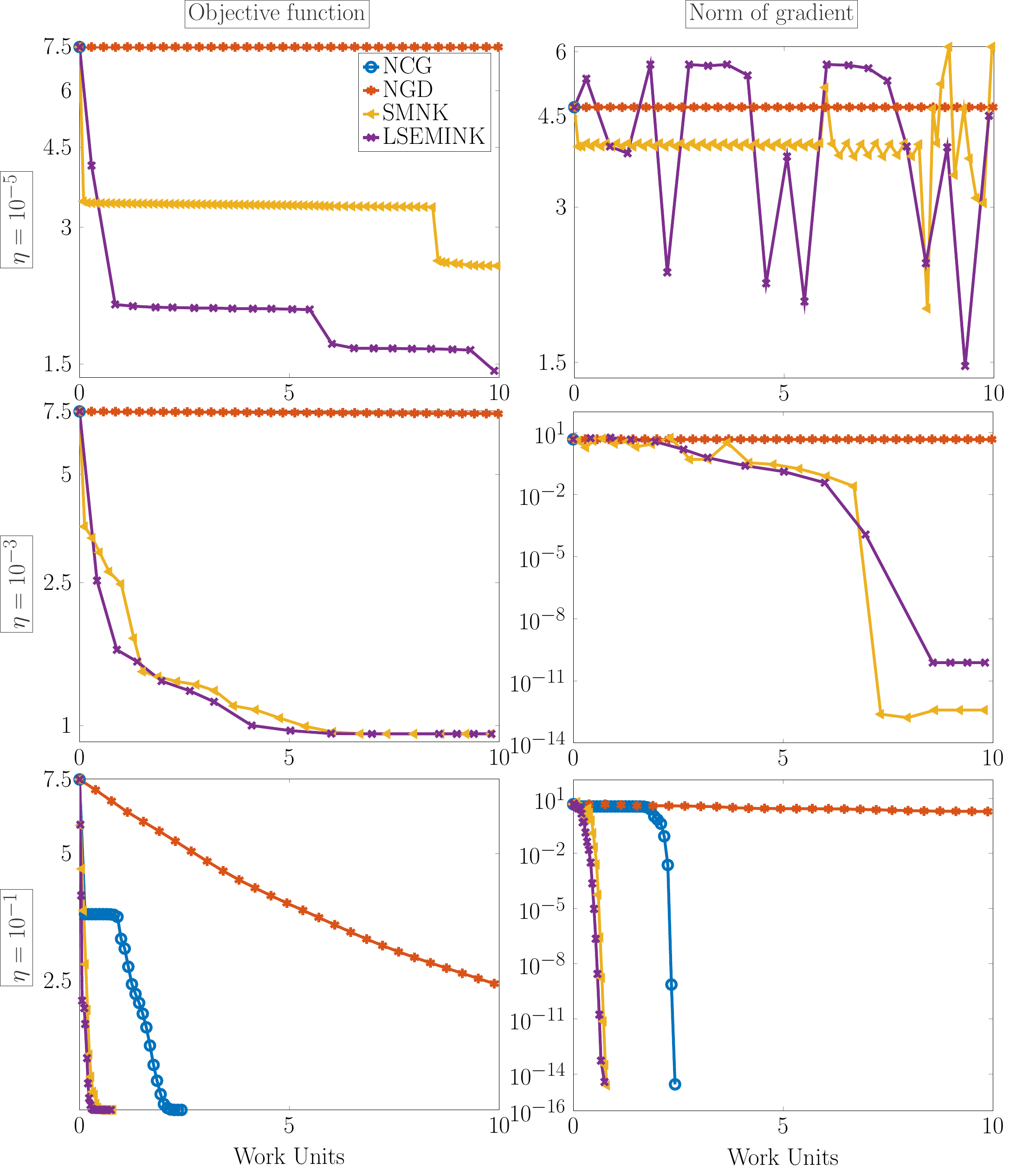}
\caption{Experimental results on geometric programming. The $x$-axes report the number of work units (in thousands). The results for the standard Newton-CG scheme are not shown because it fails in the first iteration.\label{fig:LogSumExp}}
\end{figure}	

\section{Conclusion}\label{sec:conclusion}
We present LSEMINK, a modified Newton-Krylov algorithm tailored for optimizing the log-sum-exp function for a linear model. The novelty of our approach is incorporating a Hessian shift in the row space of the linear model. This does not change the minimizers and renders the quadratic approximation to be bounded from below and the overall scheme to provably converge to a global minimum under standard assumptions. Since the update direction is computed using Krylov subspace methods which only require matrix-vector products with the linear model, LSEMINK is applicable to large-scale problems. Numerical experiments on image classification and geometric programming illustrate that LSEMINK has significantly faster initial convergence than standard Newton-Krylov methods, which is particularly attractive in applications like machine learning, and considerably reduces the time-to-solution and is more scalable compared to DCP solvers and natural gradient descent. Also, LSEMINK is more robust to ill-conditioning arising from the nonsmoothness of the problem. We provide a MATLAB implementation at~\url{https://github.com/KelvinKan/LSEMINK}. 

\section*{Acknowledgements}
This work was supported in part by NSF awards DMS 1751636, DMS 2038118, AFOSR grant FA9550-20-1-0372, and US DOE Office of Advanced Scientific Computing Research Field Work Proposal 20-023231. The authors would like to thank Samy Wu Fung for sharing the code for propagating the features of the CIFAR-10 dataset with AlexNet.

\bibliographystyle{abbrv}
\bibliography{main}

\begin{thebibliography}{10}

\bibitem{aps2019mosek}
M.~ApS.
\newblock {MOSEK} optimization toolbox for {MATLAB}.
\newblock {\em User’s Guide and Reference Manual, Version}, 4, 2019.

\bibitem{aps2020mosek}
M.~ApS.
\newblock {MOSEK} modeling cookbook, 2020.

\bibitem{beck2014introduction}
A.~Beck.
\newblock {\em Introduction to nonlinear optimization: Theory, algorithms, and
  applications with {MATLAB}}.
\newblock SIAM, 2014.

\bibitem{bertsekas2009convex}
D.~Bertsekas.
\newblock {\em Convex optimization theory}, volume~1.
\newblock Athena Scientific, 2009.

\bibitem{boyd2004convex}
S.~Boyd, S.~P. Boyd, and L.~Vandenberghe.
\newblock {\em Convex optimization}.
\newblock Cambridge university press, 2004.

\bibitem{calvetti1999}
D.~Calvetti, G.~H. Golub, and L.~Reichel.
\newblock Estimation of the {L}-curve via {L}anczos bidiagonalization.
\newblock {\em BIT Numerical Mathematics}, 39(4):603--619, 1999.

\bibitem{chung2008}
J.~Chung, J.~G. Nagy, and D.~P. O’leary.
\newblock A weighted {GCV} method for {L}anczos hybrid regularization.
\newblock {\em Electronic Transactions on Numerical Analysis}, 28(Electronic
  Transactions on Numerical Analysis), 2008.

\bibitem{deng2009imagenet}
J.~Deng, W.~Dong, R.~Socher, L.-J. Li, K.~Li, and L.~Fei-Fei.
\newblock Imagenet: A large-scale hierarchical image database.
\newblock In {\em 2009 IEEE conference on computer vision and pattern
  recognition}, pages 248--255. Ieee, 2009.

\bibitem{duchi2011adaptive}
J.~Duchi, E.~Hazan, and Y.~Singer.
\newblock Adaptive subgradient methods for online learning and stochastic
  optimization.
\newblock {\em Journal of machine learning research}, 12(7), 2011.

\bibitem{dunn1980}
J.~C. Dunn.
\newblock {N}ewton’s method and the {G}oldstein step-length rule for
  constrained minimization problems.
\newblock {\em SIAM Journal on Control and Optimization}, 18(6):659--674, 1980.

\bibitem{engl1996regularization}
H.~W. Engl, M.~Hanke, and A.~Neubauer.
\newblock {\em Regularization of inverse problems}, volume 375.
\newblock Springer Science \& Business Media, 1996.

\bibitem{gao2017properties}
B.~Gao and L.~Pavel.
\newblock On the properties of the softmax function with application in game
  theory and reinforcement learning.
\newblock {\em arXiv preprint arXiv:1704.00805}, 2017.

\bibitem{ghodousian2022log}
A.~Ghodousian, A.~N. Azad, and H.~Amiri.
\newblock Log-sum-exp optimization problem subjected to {L}ukasiewicz fuzzy
  relational inequalities.
\newblock {\em arXiv preprint arXiv:2206.09716}, 2022.

\bibitem{gill1974newton}
P.~E. Gill and W.~Murray.
\newblock {N}ewton-type methods for unconstrained and linearly constrained
  optimization.
\newblock {\em Mathematical Programming}, 7:311--350, 1974.

\bibitem{golub1979}
G.~H. Golub, M.~Heath, and G.~Wahba.
\newblock Generalized cross-validation as a method for choosing a good ridge
  parameter.
\newblock {\em Technometrics}, 21(2):215--223, 1979.

\bibitem{goodfellow2016deep}
I.~Goodfellow, Y.~Bengio, and A.~Courville.
\newblock {\em Deep learning}.
\newblock MIT press, 2016.

\bibitem{grant2008cvx}
M.~Grant, S.~Boyd, and Y.~Ye.
\newblock {CVX}: {MATLAB} software for disciplined convex programming, 2008.

\bibitem{greenstadt1967relative}
J.~Greenstadt.
\newblock On the relative efficiencies of gradient methods.
\newblock {\em Mathematics of Computation}, 21(99):360--367, 1967.

\bibitem{hansen1998rank}
P.~C. Hansen.
\newblock {\em Rank-deficient and discrete ill-posed problems: numerical
  aspects of linear inversion}.
\newblock SIAM, 1998.

\bibitem{huang2006extreme}
G.-B. Huang, Q.-Y. Zhu, and C.-K. Siew.
\newblock Extreme learning machine: theory and applications.
\newblock {\em Neurocomputing}, 70(1-3):489--501, 2006.

\bibitem{kan2020avoiding}
K.~Kan, J.~G. Nagy, and L.~Ruthotto.
\newblock Avoiding the double descent phenomenon of random feature models using
  hybrid regularization.
\newblock {\em arXiv preprint arXiv:2012.06667}, 2020.

\bibitem{kan2021pnkh}
K.~Kan, S.~Wu~Fung, and L.~Ruthotto.
\newblock {PNKH-B}: A projected {N}ewton--{K}rylov method for large-scale
  bound-constrained optimization.
\newblock {\em SIAM Journal on Scientific Computing}, 43(5):S704--S726, 2021.

\bibitem{kim2018adaptive}
D.~Kim and J.~A. Fessler.
\newblock Adaptive restart of the optimized gradient method for convex
  optimization.
\newblock {\em Journal of Optimization Theory and Applications},
  178(1):240--263, 2018.

\bibitem{kim2015guide}
S.~Kim, R.~Pasupathy, and S.~G. Henderson.
\newblock A guide to sample average approximation.
\newblock {\em Handbook of simulation optimization}, pages 207--243, 2015.

\bibitem{kleywegt2002}
A.~J. Kleywegt, A.~Shapiro, and T.~Homem-de Mello.
\newblock The sample average approximation method for stochastic discrete
  optimization.
\newblock {\em SIAM Journal on optimization}, 12(2):479--502, 2002.

\bibitem{kong2020rankmax}
W.~Kong, W.~Krichene, N.~Mayoraz, S.~Rendle, and L.~Zhang.
\newblock Rankmax: An adaptive projection alternative to the softmax function.
\newblock {\em Advances in Neural Information Processing Systems}, 33:633--643,
  2020.

\bibitem{krizhevsky2009learning}
A.~Krizhevsky and G.~Hinton.
\newblock Learning multiple layers of features from tiny images.
\newblock 2009.

\bibitem{krizhevsky2017imagenet}
A.~Krizhevsky, I.~Sutskever, and G.~E. Hinton.
\newblock Imagenet classification with deep convolutional neural networks.
\newblock {\em Communications of the ACM}, 60(6):84--90, 2017.

\bibitem{lecun1998mnist}
Y.~Le{C}un.
\newblock The {MNIST} database of handwritten digits.
\newblock {\em http://yann. lecun. com/exdb/mnist/}, 1998.

\bibitem{lee2014proximal}
J.~D. Lee, Y.~Sun, and M.~A. Saunders.
\newblock Proximal {N}ewton-type methods for minimizing composite functions.
\newblock {\em SIAM Journal on Optimization}, 24(3):1420--1443, 2014.

\bibitem{levenberg1944method}
K.~Levenberg.
\newblock A method for the solution of certain non-linear problems in least
  squares.
\newblock {\em Quarterly of applied mathematics}, 2(2):164--168, 1944.

\bibitem{marquardt1963algorithm}
D.~W. Marquardt.
\newblock An algorithm for least-squares estimation of nonlinear parameters.
\newblock {\em Journal of the society for Industrial and Applied Mathematics},
  11(2):431--441, 1963.

\bibitem{more1979use}
J.~J. Mor{\'e} and D.~C. Sorensen.
\newblock On the use of directions of negative curvature in a modified {N}ewton
  method.
\newblock {\em Mathematical Programming}, 16:1--20, 1979.

\bibitem{nash1984newton}
S.~G. Nash.
\newblock {N}ewton-type minimization via the {L}anczos method.
\newblock {\em SIAM Journal on Numerical Analysis}, 21(4):770--788, 1984.

\bibitem{nemirovski2009}
A.~Nemirovski, A.~Juditsky, G.~Lan, and A.~Shapiro.
\newblock Robust stochastic approximation approach to stochastic programming.
\newblock {\em SIAM Journal on optimization}, 19(4):1574--1609, 2009.

\bibitem{newman2021train}
E.~Newman, L.~Ruthotto, J.~Hart, and B.~van Bloemen~Waanders.
\newblock Train like a (var) pro: efficient training of neural networks with
  variable projection.
\newblock {\em SIAM Journal on Mathematics of Data Science}, 3(4):1041--1066,
  2021.

\bibitem{nielsen2016guaranteed}
F.~Nielsen and K.~Sun.
\newblock Guaranteed bounds on the {K}ullback--{L}eibler divergence of
  univariate mixtures.
\newblock {\em IEEE Signal Processing Letters}, 23(11):1543--1546, 2016.

\bibitem{nocedal2006}
J.~Nocedal and S.~Wright.
\newblock {\em Numerical optimization}.
\newblock Springer Science \& Business Media, 2006.

\bibitem{NurbekyanEtAl2022}
L.~Nurbekyan, W.~Lei, and Y.~Yang.
\newblock Efficient natural gradient descent methods for large-scale
  optimization problems.
\newblock {\em arXiv preprint arXiv:2202.06236}, 2022.

\bibitem{o2015adaptive}
B.~O’{D}onoghue and E.~Candes.
\newblock Adaptive restart for accelerated gradient schemes.
\newblock {\em Foundations of computational mathematics}, 15(3):715--732, 2015.

\bibitem{parikh2014proximal}
N.~Parikh and S.~Boyd.
\newblock Proximal algorithms.
\newblock {\em Foundations and trends{\textregistered} in Optimization},
  1(3):127--239, 2014.

\bibitem{pascanu2013revisiting}
R.~Pascanu and Y.~Bengio.
\newblock Revisiting natural gradient for deep networks.
\newblock {\em arXiv preprint arXiv:1301.3584}, 2013.

\bibitem{rahimi2007random}
A.~Rahimi and B.~Recht.
\newblock Random features for large-scale kernel machines.
\newblock {\em Advances in neural information processing systems}, 20, 2007.

\bibitem{sturm1999using}
J.~F. Sturm.
\newblock Using sedumi 1.02, a {MATLAB} toolbox for optimization over symmetric
  cones.
\newblock {\em Optimization methods and software}, 11(1-4):625--653, 1999.

\bibitem{tseng2015milp}
C.-L. Tseng, Y.~Zhan, Q.~P. Zheng, and M.~Kumar.
\newblock A {MILP} formulation for generalized geometric programming using
  piecewise-linear approximations.
\newblock {\em European Journal of Operational Research}, 245(2):360--370,
  2015.

\bibitem{tutuncu2003solving}
R.~H. T{\"u}t{\"u}nc{\"u}, K.-C. Toh, and M.~J. Todd.
\newblock Solving semidefinite-quadratic-linear programs using {SDPT3}.
\newblock {\em Mathematical programming}, 95(2):189--217, 2003.

\bibitem{vidal2022taming}
A.~Vidal, S.~Wu~Fung, L.~Tenorio, S.~Osher, and L.~Nurbekyan.
\newblock Taming hyperparameter tuning in continuous normalizing flows using
  the {JKO} scheme.
\newblock {\em arXiv preprint arXiv:2211.16757}, 2022.

\bibitem{vogel2002}
C.~R. Vogel.
\newblock {\em Computational methods for inverse problems}.
\newblock SIAM, 2002.

\bibitem{fung2019admm}
S.~Wu~Fung, S.~Tyrv{\"a}inen, L.~Ruthotto, and E.~Haber.
\newblock {ADMM}-softmax: An {ADMM} approach for multinomial logistic
  regression.
\newblock {\em Electronic Transactions on Numerical Analysis}, 52:214--229,
  2020.

\bibitem{xi2020log}
X.~Xi, J.~Xu, and Y.~Lou.
\newblock Log-sum-exp optimization based on continuous piecewise linearization
  techniques.
\newblock In {\em 2020 IEEE 16th International Conference on Control \&
  Automation (ICCA)}, pages 600--605. IEEE, 2020.

\bibitem{zhan2018accelerated}
Y.~Zhan, Q.~P. Zheng, C.-L. Tseng, and E.~L. Pasiliao.
\newblock An accelerated extended cutting plane approach with piecewise linear
  approximations for signomial geometric programming.
\newblock {\em Journal of Global Optimization}, 70(3):579--599, 2018.

\end{thebibliography}
\end{document}